 \newtheorem{thm}{Theorem}[section]
 \newtheorem{prop}[thm]{Proposition}
 \theoremstyle{definition}
 \theoremstyle{remark}
 \numberwithin{equation}{section}
\def\bmlambda{{\bm\lambda}}
\def\sumb{{\sum_{1\leq k_1{\tiny\begin{array}{ll}<k_2<\cdots<k_{r-1}&\!\!\!\!\!\!\!\!\!<\\
\leq\ell_1\leq\cdots\leq\ell_m&\!\!\!\!\!\!\!\!\!\leq\end{array}}k_r}}}
\def\sumu{{\sum_{1\leq k_1<\cdots<k_r\geq\ell_m\geq\cdots\geq\ell_1\geq 1}}}
\def\suml{{\sum_{k_r>\cdots>k_1\leq\ell_1\leq\cdots\leq\ell_m\atop k_1\geq 1}}}
\newcommand{\ov}[1]{\overline{#1}}
\newcommand{\ZU}[2]{%
Z_U\left(\begin{matrix}{#1}\\%
{#2}\end{matrix}\right)}
\newcommand{\ZL}[2]{%
Z_L\left(\begin{matrix}{#1}\\%
{#2}\end{matrix}\right)}
\newcommand{\ZB}[2]{%
Z_B\left(\begin{matrix}{#1}\\%
{#2}\end{matrix}\right)}
\begin{document}

%
%
%
%
%
%
%
%
%

\title[On Three General Forms of Multiple Zeta(-star) Values]
 {On Three General Forms of Multiple Zeta(-star) Values}

\author[K.-W.~Chen]{Kwang-Wu Chen}

\address{%
Department of Mathematics\\
University of Taipei\\
Taipei 10048\\
Taiwan}

\email{kwchen@uTaipei.edu.tw}

\thanks{The first author was funded by the Ministry of Science and Technology, 
Taiwan, R.O.C. grant number MOST 110-2115-M-845-001.}
\author[M.~Eie]{Minking Eie}
\address{Department of Mathematics\br
National Chung Cheng University\br
Chia-Yi 62145\br
Taiwan}
\email{minkingeie@gmail.com}
\subjclass{Primary 11M32, 33B15}

\keywords{Multiple zeta values; Multiple zeta-star values; Sum formulas}

\date{February 8, 2022}
\dedicatory{February 08, 2022. Version 07.}

\begin{abstract}
In this paper, we investigate three general forms of multiple zeta(-star) values. 
We use these values to give three new sum formulas for multiple zeta(-star)
values with height $\leq 2$ and the evaluation of $\zeta^\star(\{1\}^m,\{2\}^{n+1})$. 
We also give a new proof of sum formula of multiple zeta values.
\end{abstract}

\maketitle
\section{Introduction}
For an $r$-tuple $\boldsymbol{\alpha} = (\alpha_{1}, \alpha_{2}, \ldots, \alpha_{r})$ 
of positive integers with $\alpha_{r} \geq 2$, a multiple zeta value $\zeta(\bm\alpha)$
and a multiple zeta-star value $\zeta^\star(\bm\alpha)$
are defined to be \cite{E09, E13, Ohno2005}

\begin{align*}
  \zeta(\boldsymbol{\alpha})
  &= \sum_{1 \leq k_{1} < k_{2} < \cdots < k_{r}} k_{1}^{-\alpha_{1}}
    k_{2}^{-\alpha_{2}} \cdots k_{r}^{-\alpha_{r}},
\quad\mbox{and}\\
  \zeta^{\star}(\boldsymbol{\alpha})
  &= \sum_{1 \leq k_{1} \leq k_{2} \leq \cdots \leq k_{r}} k_{1}^{-\alpha_{1}}
    k_{2}^{-\alpha_{2}} \cdots k_{r}^{-\alpha_{r}}.
\end{align*}

\noindent We denote the parameters $w(\bm\alpha)=|\bm\alpha| 
= \alpha_{1} + \alpha_{2} + \cdots + \alpha_{r}$,
$d(\bm\alpha)=r$, and $h(\bm\alpha)=\#\{i\mid \alpha_i>1, 1\leq i\leq r\}$,
called respectively the weight, the depth, and the height of $\bm\alpha$
(or of $\zeta(\bm\alpha)$, or of $\zeta^\star(\bm\alpha)$).
For any multiple zeta value $\zeta(\bm\alpha)$, 
we put a bar on top of $\alpha_j$ ($j=1,2,\ldots,r$) if there is a sign
$(-1)^{k_j}$ appearing in the numerator of its summation \cite{BBB1997, Xu2019}. 
For example,

$$
\zeta(\ov{\alpha_1},\ov{\alpha_2},\alpha_3,\ldots,\alpha_r)
=\sum_{1\leq k_1<k_2<\cdots<k_r}
\frac{(-1)^{k_1+k_2}}
{k_1^{\alpha_1}k_2^{\alpha_2}k_3^{\alpha_3}\cdots k_r^{\alpha_r}}.
$$

To find and study the relations among multiple zeta(-star) values 
is recognized as an important problem. Euler discovered a relation which 
connects double zeta values and a Riemann zeta value. 
In 1997, Granville \cite{Gran1997} gave a generalization of Euler's sum formula
to a general depth $r$, and that is called the sum formula. For positive integers
$r$ and $k$ with $k>r$, we have 

\begin{equation}\label{eq.sum}
\sum_{|\bm\alpha|=k}\zeta(\alpha_1,\ldots,\alpha_r)=\zeta(k).
\end{equation}

There is a corresponding sum formula for multiple zeta-star values

\begin{equation}\label{eq.sumstar}
\sum_{|\bm\alpha|=k}\zeta^\star(\alpha_1,\ldots,\alpha_r)
=\binom{k-1}{r-1}\zeta(k).
\end{equation}

So far, there are not many relations among multiple zeta-star values.
Aoki and Ohno \cite{AO2005} proved that for fixed weight $k$ 
and height $s$, we have

\begin{equation}\label{eq.fixwh}
\sum_{\bm\alpha\in I_0(k,s)}\zeta^\star(\bm\alpha)
=2\binom{k-1}{2s-1}(1-2^{1-k})\zeta(k),
\end{equation}

\noindent where $I_0(k,s)$ is the set of admissible multi-indices $\bm\alpha$
with weight $k$ and height $s$. 
In this paper, we investigate the following general forms of multiple zeta(-star) values:

\begin{align*}
&\ZU{\alpha_1,\alpha_2,\ldots,\alpha_r}{\beta_1,\beta_2,\ldots,\beta_m}\\
&:=\!\!\!\!\!\!
\sum_{1\leq k_1<k_2<\cdots<k_r}k_1^{-\alpha_1}k_2^{-\alpha_2}\cdots k_r^{-\alpha_r}
\sum_{1\leq\ell_1\leq\ell_2\leq\cdots\leq\ell_m\leq k_r}
\ell_1^{-\beta_1}\ell_2^{-\beta_2}\cdots\ell_m^{-\beta_m},\\
&\ZL{\alpha_1,\alpha_2,\ldots,\alpha_r}{\beta_1,\beta_2,\ldots,\beta_m}\\
&:=\!\!\!\!\!\!
\sum_{1\leq k_1<k_2<\cdots<k_r}k_1^{-\alpha_1}k_2^{-\alpha_2}\cdots k_r^{-\alpha_r}
\sum_{k_1\leq\ell_1\leq\ell_2\leq\cdots\leq\ell_m}
\ell_1^{-\beta_1}\ell_2^{-\beta_2}\cdots\ell_m^{-\beta_m},\\
&\ZB{\alpha_1,\alpha_2,\ldots,\alpha_r}{\beta_1,\beta_2,\ldots,\beta_m}\\
&:=\!\!\!\!\!\!
\sum_{1\leq k_1<k_2<\cdots<k_r}k_1^{-\alpha_1}k_2^{-\alpha_2}\cdots k_r^{-\alpha_r}
\sum_{k_1\leq\ell_1\leq\ell_2\leq\cdots\leq\ell_m\leq k_r}
\ell_1^{-\beta_1}\ell_2^{-\beta_2}\cdots\ell_m^{-\beta_m}.
\end{align*}

Kaneko and Yamamoto \cite{KY2018} conjecture that 
the following identity of function $Z_U$ 
can give all linear relations of multiple zeta values over $\mathbb Q$. 

\[
I\left(\ \begin{xy}
{(6,3)*++[o][F]{{\bm\alpha}}="k"},
{(15,-4)*++[F]{{\bm\beta}}="l"},
{(0,-3) \ar @{{*}-} "k"}, 
{"k" \ar @{-} "l"}, 
{(15,-4) \ar @{-} "l"}, 
\end{xy}
\ \right)=\ZU{\bm\alpha}{\bm\beta}, 
\]

\noindent where the left-hand side of the above identity 
is an integral associated with a $2$-posets 
which was introduced by Yamamoto \cite{Yama2017}.

In this paper, we first use the function $Z_B$ to give two new sum formulas
for multiple zeta-star values. 

\begin{thm}\label{thm.01}
For integers $m\geq 0$ and $n\geq 2$, we have 

\begin{equation}\label{eq.two}
\sum_{\alpha_1+\alpha_2=n\atop \alpha_1,\alpha_2\geq 1}\zeta^\star(\alpha_1,\{1\}^m,\alpha_2+1)
=(m+n)\zeta(m+n+1).
\end{equation}
\end{thm}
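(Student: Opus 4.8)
The plan is to peel the star-sum down to a strictly ordered sum governed by $Z_B$ plus an elementary diagonal term, and then to evaluate the $Z_B$-sum by unfolding it into ordinary multiple zeta values and invoking the sum formula \eqref{eq.sum}. Writing the index $(\alpha_1,\{1\}^m,\alpha_2+1)$ as a nested sum over $1\le j_1\le\cdots\le j_{m+2}$, I would split off the locus $j_1=j_{m+2}$, on which all indices coincide and which contributes $\zeta(\alpha_1+m+\alpha_2+1)=\zeta(m+n+1)$; on the complementary locus $j_1<j_{m+2}$ the substitution $k_1=j_1$, $k_2=j_{m+2}$, $\ell_i=j_{i+1}$ reproduces the defining sum of $Z_B$ exactly. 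This yields
\[
\zeta^\star(\alpha_1,\{1\}^m,\alpha_2+1)=\ZB{\alpha_1,\alpha_2+1}{\{1\}^m}+\zeta(m+n+1).
\]
Summing over the $n-1$ admissible splits of $n$, the diagonal terms give $(n-1)\zeta(m+n+1)$, so \eqref{eq.two} is equivalent to the $Z_B$-sum formula
\[
\sum_{\alpha_1+\alpha_2=n\atop\alpha_1,\alpha_2\ge1}\ZB{\alpha_1,\alpha_2+1}{\{1\}^m}=(m+1)\,\zeta(m+n+1).
\]

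Next I would attack this $Z_B$ identity by collapsing the inner chain $k_1\le\ell_1\le\cdots\le\ell_m\le k_2$ onto its distinct values. Recording whether each of the $m$ unit exponents merges with $k_1$, lands strictly between, or merges with $k_2$ expresses $\ZB{\alpha_1,\alpha_2+1}{\{1\}^m}$ as a finite sum of admissible $\zeta(\gamma_1,\dots,\gamma_r)$ of weight $m+n+1$ and depth $2\le r\le m+2$. When $n=2$ the single term $\ZB{1,2}{\{1\}^m}$ already unfolds into \emph{every} admissible multiple zeta value of weight $m+3$ exactly once: the constraints $\gamma_1\ge1$, $\gamma_r\ge2$ force the merging pattern uniquely. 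Applying \eqref{eq.sum} over the $m+1$ depths $r=2,\dots,m+2$ then produces $(m+1)\zeta(m+3)$ in one stroke.

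The main obstacle is that for $n>2$ the multiplicities in the total sum are no longer uniform across a fixed depth, so the depthwise sum formula leaves a residue of genuine multiple zeta values that must cancel. For instance at $m=1$, $n=3$ the expansion produces an excess $\zeta(2,3)$ at depth $2$ and falls one $\zeta(1,2,2)$ short at depth $3$, and these cancel precisely because $\zeta(2,3)=\zeta(1,2,2)$ by duality. Carrying this out for all $m,n$ thus reduces to organizing the residual terms into duality pairs, which I expect to be the technical heart of the argument. Alternatively, and more in keeping with the role of these functions here, one can bypass the bookkeeping by representing $\ZB{\alpha_1,\alpha_2+1}{\{1\}^m}$ as an iterated integral in the manner of Kaneko and Yamamoto \cite{KY2018}, summing the geometric series in $\alpha_1$ under the integral sign, and reading off $(m+1)\zeta(m+n+1)$ from the resulting elementary integral.
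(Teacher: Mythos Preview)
Your opening reduction to the $Z_B$-sum identity
\[
\sum_{\alpha_1+\alpha_2=n}\ZB{\alpha_1,\alpha_2+1}{\{1\}^m}=(m+1)\,\zeta(m+n+1)
\]
is correct and is exactly the paper's first move. The gap is precisely where you locate it: for $n>2$ your depth-stratified expansion of $Z_B$ does not produce uniform multiplicities, and neither the promised organisation of the residue into duality pairs nor the iterated-integral alternative is actually carried out. As written this is a plan, not a proof.

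The paper's completion avoids this bookkeeping altogether by routing through duals of \emph{fixed} depth. It applies Proposition~\ref{prop.163}: if $(\beta_1,\beta_2+1)$ has dual $(\alpha_1,\dots,\alpha_q,\alpha_{q+1}+1)$, then
\[
\ZB{\beta_1,\beta_2+1}{\{1\}^m}
=\sum_{|\bm d|=m}\prod_{i=1}^{q+1}\binom{\alpha_i+d_i-1}{d_i}\,\zeta(\alpha_1+d_1,\dots,\alpha_{q+1}+d_{q+1}+1).
\]
For $\beta_1+\beta_2=n$ the dual has depth $q+1=n-1$, and every $\alpha_i=1$ except a single $\alpha_j=2$; the binomial product collapses to the weight $d_j+1=\gamma_j-1$ after the substitution $\gamma_i=\alpha_i+d_i$. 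Summing over the $n-1$ pairs $(\beta_1,\beta_2)$ is the same as summing over $j=1,\dots,n-1$, which converts the weight to the constant $\sum_j(\gamma_j-1)=|\bm\gamma|-(n-1)=m+1$. Now a \emph{single} application of the sum formula~\eqref{eq.sum} at depth $n-1$ yields $(m+1)\zeta(m+n+1)$. So the missing ingredient in your argument is precisely this generalized duality expression for $Z_B$, which keeps the depth fixed and makes the residual cancellations you anticipated unnecessary.
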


For brevity, we hereafter use the lowercase English letters 
and lowercase Greek letters in the summation, with or without subscripts, 
to denote the nonnegative 
and positive integers, unless otherwise specified. For instance,

\begin{align*}
\sum_{\alpha_1+\alpha_2=5}\zeta^\star(\alpha_1,1,\alpha_2+1)
&=\zeta^\star(1,1,5)+\zeta^\star(2,1,4)+\zeta^\star(3,1,3)+\zeta^\star(4,1,2) \\
  \sum_{a+b=3} (-1)^{b} \zeta(\{1\}^{a},b+2)
&= -\zeta(5) + \zeta(1,4) - \zeta(1,1,3) + \zeta(1,1,1,2).
\end{align*}

We derive from (\ref{eq.two}) to get a new sum formula 
for multiple zeta-star values with height two in the following theorem.

\begin{thm}\label{thm.02}
For any non-negative integer $p$, we have

\begin{equation}\label{eq.sumh2}
\sum_{a+b+m=p}\zeta^\star(a+2,\{1\}^m,b+2)
=\left(p^2+3p+1+\frac{p+3}{2^{p+2}}\right)\zeta(p+4).
\end{equation}
\end{thm}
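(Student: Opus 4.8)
The plan is to derive Theorem~\ref{thm.02} from Theorem~\ref{thm.01} by summing the identity \eqref{eq.two} against an appropriate weight, rather than re-deriving everything from scratch. The target sum involves $\zeta^\star(a+2,\{1\}^m,b+2)$ with $a+b+m=p$, so the first move is to set up a summation variable that lets me match the shape of \eqref{eq.two}. In \eqref{eq.two} the first argument is $\alpha_1\geq 1$ and the last is $\alpha_2+1$ with $\alpha_1+\alpha_2=n$; comparing with $\zeta^\star(a+2,\{1\}^m,b+2)$ I would identify $\alpha_1=a+2$ and $\alpha_2+1=b+2$, i.e.\ $\alpha_2=b+1$, forcing $n=\alpha_1+\alpha_2=a+b+3$. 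So for each fixed value of $a+b$ the inner sum over $\zeta^\star(a+2,\{1\}^m,b+2)$ with $a,b\geq 0$ is \emph{almost} an instance of \eqref{eq.two}, except that \eqref{eq.two} ranges over all $\alpha_1,\alpha_2\geq 1$ with fixed $m$, whereas here I want $a,b\geq 0$ (so $\alpha_1\geq 2$, $\alpha_2\geq 1$) and I am summing over $m$ as well.

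Concretely, I would first fix $m$ and the total $a+b=:s$, and relate the partial sum $\sum_{a+b=s,\,a,b\geq 0}\zeta^\star(a+2,\{1\}^m,b+2)$ to the full sum in \eqref{eq.two} with $n=s+3$. The full sum \eqref{eq.two} includes the boundary terms where $\alpha_1=1$ (i.e.\ $a=-1$), so I must subtract the term $\zeta^\star(1,\{1\}^m,\alpha_2+1)=\zeta^\star(\{1\}^{m+1},s+2)$ coming from $\alpha_1=1,\ \alpha_2=s+2$. This is where the evaluation of $\zeta^\star(\{1\}^m,\{2\}^{n+1})$ advertised in the abstract is likely needed, or at least the evaluation of $\zeta^\star(\{1\}^{m+1},k)$ as a single zeta value; such height-one star values are classically expressible via $\zeta(k)$ with binomial coefficients, so I would insert that known closed form. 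After the subtraction, \eqref{eq.two} gives $\sum_{a+b=s}\zeta^\star(a+2,\{1\}^m,b+2)=(m+s+3)\zeta(m+s+4)-\zeta^\star(\{1\}^{m+1},s+2)$.

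Next I would sum this over all $m$ and $s$ with $m+s=p$ fixed (since $a+b+m=p$ means $s+m=p$). Using $m+s+3=p+3$ and $m+s+4=p+4$, the main term collapses beautifully: $\sum_{m+s=p}(p+3)\zeta(p+4)=(p+3)(p+1)\zeta(p+4)$, because there are $p+1$ pairs $(m,s)$ with $m,s\geq 0$ and $m+s=p$. This already produces the dominant quadratic-in-$p$ piece of the right-hand side of \eqref{eq.sumh2}, since $(p+3)(p+1)=p^2+4p+3$. The remaining correction is $-\sum_{m+s=p}\zeta^\star(\{1\}^{m+1},s+2)$, and the whole game reduces to showing this equals $-(p^2+4p+3)\zeta(p+4)+(p^2+3p+1+\tfrac{p+3}{2^{p+2}})\zeta(p+4)=-(p+2-\tfrac{p+3}{2^{p+2}})\zeta(p+4)$, i.e.\ a height-one star-value sum formula producing exactly that factor.

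The hard part will be evaluating the residual sum $\sum_{m+s=p}\zeta^\star(\{1\}^{m+1},s+2)$ and extracting the peculiar $2^{-(p+2)}$ term, which is the signature of height-one multiple zeta-star values (these are known to involve a $(1-2^{1-k})$-type factor, as already visible in \eqref{eq.fixwh}). I expect to invoke the known evaluation $\zeta^\star(\{1\}^{j},k)$ in terms of $\zeta(k+j)$ with explicit rational coefficients, then recognize the resulting single sum as a telescoping or binomial identity that yields $(p+2)-\tfrac{p+3}{2^{p+2}}$. The dyadic term is the delicate bookkeeping point, and verifying the exact constant $p^2+3p+1$ (rather than $p^2+4p+3$) will hinge on getting that height-one correction exactly right.
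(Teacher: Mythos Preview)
Your approach is exactly the one the paper uses: reduce to Theorem~\ref{thm.01}, subtract the boundary term, sum over $m+s=p$, and then evaluate the residual height-one star sum. Two points need correcting.

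First, a slip in the boundary term: with $\alpha_1=1$ and $\alpha_2=s+2$ you subtract $\zeta^\star(1,\{1\}^m,\alpha_2+1)=\zeta^\star(\{1\}^{m+1},s+3)$, not $\zeta^\star(\{1\}^{m+1},s+2)$. (Your target value $(p+2-\tfrac{p+3}{2^{p+2}})\zeta(p+4)$ for the residual is correct, so you evidently computed with the right weight.)

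Second, and more substantive: individual values $\zeta^\star(\{1\}^{j},k)$ are \emph{not} rational multiples of $\zeta(j+k)$ in general (already $\zeta^\star(1,4)=\zeta(1,4)+\zeta(5)$ involves $\zeta(2)\zeta(3)$), so you cannot evaluate the residual sum term-by-term as you propose. The paper's move---and the one you need---is to reindex $\sum_{m+s=p}\zeta^\star(\{1\}^{m+1},s+3)$ as the full height-one sum $Z^\star_+(p+2)=\sum_{a+b=p+2}\zeta^\star(\{1\}^a,b+2)$ minus the two boundary terms $\zeta(p+4)$ and $\zeta^\star(\{1\}^{p+2},2)=(p+3)\zeta(p+4)$, and then plug in the known closed form $Z^\star_+(n)=2(n+1)(1-2^{-n-1})\zeta(n+2)$ from~\eqref{eq.zsp}. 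That produces your $(p+2-\tfrac{p+3}{2^{p+2}})\zeta(p+4)$ directly.
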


Secondly, we use the function $Z_L$ to give a new weighted sum formula for 
multiple zeta values with height two.

\begin{thm}\label{thm.03}
For any non-negative integer $p$, we have

\begin{align}\label{eq.nsumh2}
\sum_{a+b+m=p}&(-1)^m\zeta(a+2,\{1\}^m,b+2)\\
&=2\left[(-1)^p-1\right]\zeta(p+2,\overline{2})
+\left(1-\frac1{2^{p+2}}\right)\zeta(p+4).\nonumber
\end{align}

In particular, if $p=2q$, then 

\begin{equation}\label{eq.evenh2}
\sum_{a+b+m=2q}(-1)^m\zeta(a+2,\{1\}^m,b+2)
=\left(1-\frac1{2^{2q+2}}\right)\zeta(2q+4).
\end{equation}

\end{thm}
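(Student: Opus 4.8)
The plan is to isolate the left–hand side of \eqref{eq.nsumh2} as a single Taylor coefficient of a one–variable generating series and then to evaluate that series through the function $Z_L$. The key preliminary observation is that every summand $\zeta(a+2,\{1\}^m,b+2)$ has weight $(a+2)+m+(b+2)=p+4$, so the constraint $a+b+m=p$ simply collects all multiple zeta values of this fixed shape and weight. I would therefore set
\[
F(x)=\sum_{p\ge0}x^p\sum_{a+b+m=p}(-1)^m\zeta(a+2,\{1\}^m,b+2)
=\sum_{a,b,m\ge0}x^{a+b+m}(-1)^m\zeta(a+2,\{1\}^m,b+2),
\]
and recover the theorem by reading off the coefficient of $x^p$ from a closed form for $F$.

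The decisive simplification is the alternating telescoping of the block $\{1\}^m$. Writing $\zeta(a+2,\{1\}^m,b+2)=\sum_{1\le j<n_1<\cdots<n_m<k}j^{-(a+2)}(n_1\cdots n_m)^{-1}k^{-(b+2)}$ and summing the geometric series in $a$ and $b$ produces the factors $\frac{1}{j(j-x)}$ and $\frac{1}{k(k-x)}$, while the inner sum over the strictly increasing string, weighted by $(-x)^m$, is an elementary symmetric function, $\sum_{m\ge0}(-x)^m e_m\!\left(\tfrac1{j+1},\dots,\tfrac1{k-1}\right)=\prod_{i=j+1}^{k-1}\bigl(1-\tfrac{x}{i}\bigr)$. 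Hence
\[
F(x)=\sum_{1\le j<k}\frac{1}{j(j-x)\,k(k-x)}\prod_{i=j+1}^{k-1}\frac{i-x}{i},
\]
a generating form of a $Z$–value of the type introduced above.

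To evaluate $F$ I would carry out the summation over the upper index $k$ first. The partial fraction $\frac{1}{k(k-x)}=\frac1x\bigl(\frac{1}{k-x}-\frac1k\bigr)$, together with the clean telescoping identity $\sum_{k>j}\frac1k\prod_{i=j+1}^{k-1}\frac{i-x}{i}=\frac1x$, reduces the problem to the tail sum $\sum_{k>j}\frac{1}{k-x}\prod_{i=j+1}^{k-1}\frac{i-x}{i}$; at this stage the bounded block has become the unbounded non–strict string characteristic of $Z_L$, which is where $Z_L$ governs the computation. I would then recognise this tail through the generalised binomial identity $\sum_{k\ge1}\frac{\Gamma(k-x)}{\Gamma(k)}t^k=\Gamma(1-x)\,t\,(1-t)^{x-1}$ and an associated Beta–type integral, turning $F(x)$ into an integral that expands into a hypergeometric series of the form $\FTS{1}{1}{1-x}{2}{2}{1}$. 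Expanding in powers of $x$ and extracting the coefficient of $x^p$ separates the two pieces of \eqref{eq.nsumh2}: the ordinary part contributes $\bigl(1-2^{-(p+2)}\bigr)\zeta(p+4)$, the factor $2^{-(p+2)}$ reflecting a pole located at a doubled index, and the remaining part contributes $2[(-1)^p-1]\zeta(p+2,\ov{2})$.

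The hard part will be producing the alternating value $\zeta(p+2,\ov{2})$ with the exact coefficient $2[(-1)^p-1]$: the sign $(-1)^k$ is nowhere visible in $F$ and must be manufactured from the reflection inherent in the Beta integral, equivalently from the evaluation of the above ${}_3F_2$ at argument $1$, which is precisely the step that injects an $\eta$–type, and hence alternating, contribution. Once the general identity \eqref{eq.nsumh2} is established, the even case \eqref{eq.evenh2} is immediate, since $(-1)^{2q}-1=0$ annihilates the alternating term and leaves exactly $\bigl(1-2^{-(2q+2)}\bigr)\zeta(2q+4)$.
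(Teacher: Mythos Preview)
Your route is genuinely different from the paper's. The paper never touches generating functions or hypergeometric series: it applies the $Z_L$-identity \eqref{eq.415} (Proposition~\ref{prop.428}) to the string $(c+2,\{1\}^m,d+2)$, obtaining
\[
(-1)^m\zeta(c+2,\{1\}^m,d+2)+\zeta^\star(d+2,\{1\}^m,c+2)
=\sum_{k+\ell=m}(-1)^\ell\zeta^\star(\{1\}^k,c+2)\zeta(\{1\}^\ell,d+2),
\]
sums this over $c+d+m=p$, recognises the right-hand side as the convolution $\sum_{m+n=p}Z^\star_+(m)Z_-(n)$ already evaluated in \eqref{eq.10}, and then subtracts the $\zeta^\star$-sum furnished by Theorem~\ref{thm.02}. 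In particular the alternating value $\zeta(p+2,\overline{2})$ is imported wholesale from the prior convolution formula \eqref{eq.10}; nothing in the present argument has to manufacture it. The paper's method is short precisely because it rests on those two earlier results, whereas your approach is self-contained in spirit but must carry the full analytic burden.

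Your generating-function setup is correct through the telescoping: the expression $F(x)=\sum_{j<k}\frac{1}{j(j-x)k(k-x)}\prod_{i=j+1}^{k-1}\frac{i-x}{i}$ and the evaluation $\sum_{k>j}\frac{1}{k}\prod_{i=j+1}^{k-1}\frac{i-x}{i}=\frac{1}{x}$ are both valid. The gap is exactly where you locate it yourself. You assert that the surviving tail can be packaged as $\FTS{1}{1}{1-x}{2}{2}{1}$ and that its $x$-expansion splits into $(1-2^{-(p+2)})\zeta(p+4)$ and $2[(-1)^p-1]\zeta(p+2,\overline{2})$, but neither step is carried out. There is no visible mechanism in your Beta-integral sketch by which the sign $(-1)^k$ on the \emph{inner} summation index of $\zeta(p+2,\overline{2})$ appears; reflection in a Beta integral yields shifted Gamma factors, not an alternating sign inside a nested sum, and the particular ${}_3F_2$ you name does not on its face carry an $\eta$-type double sum either. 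This is not routine bookkeeping to be filled in later---it is the entire content of the theorem, since the non-alternating piece is comparatively soft. Until you can exhibit concretely how $\zeta(p+2,\overline{2})$ with coefficient $2[(-1)^p-1]$ emerges from your integral, the proposal remains a plan rather than a proof.
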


Thirdly, we use the function $Z_U$ to give an evaluation of the multiple zeta-star values
$\zeta^\star(\{1\}^m,\{2\}^{n+1})$.

\begin{thm}\label{thm.4}
For nonnegative integers $m,n$, we have

\begin{align}\label{eq.1m2n}
\zeta^\star(\{1\}^m,\{2\}^{n+1})
&=\sum_{p+r=n}(-1)^r\zeta^\star(\{2\}^p)\\
&\qquad\times\sum_{|\bm d|=m}\zeta(d_1+2,\ldots,d_{r+1}+2)
\prod^{r+1}_{j=1}(d_j+1).\nonumber
\end{align}
\end{thm}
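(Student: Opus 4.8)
The plan is to prove the identity by comparing two generating functions, introducing an auxiliary variable $t$ to mark the number $m$ of $1$'s and a variable $z^2$ to mark the number $n+1$ of $2$'s. Writing $L_n(t)=\sum_{m\ge0}t^m\zeta^\star(\{1\}^m,\{2\}^{n+1})$ and letting $R_n(t)$ be the $t$-series attached to the right-hand side, it suffices to prove the single functional identity $\sum_{n\ge0}L_n(t)z^{2n+2}=\sum_{n\ge0}R_n(t)z^{2n+2}$ and then read off the coefficient of $t^m z^{2n+2}$. Every series here converges for small real $t$ and $z$ (the index strings are admissible, ending in $2$), so the rearrangements are legitimate and the identity for all coefficients follows by analytic continuation in $t$.

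For the left-hand side I would first sum over the $1$'s. Given the block $b_1\le\cdots\le b_{n+1}$ carrying the exponents $2$, the $1$'s run over $1\le a_1\le\cdots\le a_m\le b_1$, and summing the resulting complete homogeneous symmetric function gives $\sum_{m\ge0}t^m\sum_{a_1\le\cdots\le a_m\le b_1}(a_1\cdots a_m)^{-1}=\prod_{i=1}^{b_1}(1-t/i)^{-1}$. Summing next over the $2$'s (fix $b_1$, let the remaining $b_i\ge b_1$) produces $\prod_{i\ge b_1}(1-z^2/i^2)^{-1}$. Converting both products to Gamma quotients and simplifying the prefactor $z^2\,N!/(N^2\Gamma(N)^2)=z^2/N!$ collapses the double generating function into a single hypergeometric series,
$$\sum_{n\ge0}L_n(t)z^{2n+2}=\frac{\pi z^3}{(1-t)\sin\pi z}\,\FTS{1}{1-z}{1+z}{2}{2-t}{1}.$$

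For the right-hand side the two blocks separate cleanly. Using $\sum_{d\ge0}(d+1)k^{-(d+2)}t^d=(k-t)^{-2}$, the inner $t$-series of $\sum_{|\bm d|=m}\zeta(d_1+2,\dots,d_{r+1}+2)\prod_j(d_j+1)$ is $P_r(t):=\sum_{k_1<\cdots<k_{r+1}}\prod_j(k_j-t)^{-2}$, whose $z$-series is $\sum_{r\ge0}(-1)^rP_r(t)z^{2r+2}=1-\prod_{i\ge1}\bigl(1-z^2/(i-t)^2\bigr)$, while $\sum_{p\ge0}\zeta^\star(\{2\}^p)z^{2p}=\pi z/\sin\pi z$. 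Multiplying these, and evaluating the shifted product as $\prod_{i\ge1}\bigl(1-z^2/(i-t)^2\bigr)=\Gamma(1-t)^2/[\Gamma(1-t-z)\Gamma(1-t+z)]$, yields
$$\sum_{n\ge0}R_n(t)z^{2n+2}=\frac{\pi z}{\sin\pi z}\left[1-\frac{\Gamma(1-t)^2}{\Gamma(1-t-z)\Gamma(1-t+z)}\right].$$

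Comparing the two closed forms, the whole theorem reduces to the single ${}_3F_2$ evaluation
$$\FTS{1}{1-z}{1+z}{2}{2-t}{1}=\frac{1-t}{z^2}\left[1-\frac{\Gamma(1-t)^2}{\Gamma(1-t-z)\Gamma(1-t+z)}\right],$$
which I expect to be the main obstacle. I would establish it by writing the series as $\int_0^1{}_2F_1(1-z,1+z;2-t;x)\,dx$ (via $1/(j+1)=\int_0^1x^j\,dx$), inserting Euler's integral for the ${}_2F_1$, carrying out the $x$-integration through $\int_0^1(1-xs)^{z-1}dx=(1-(1-s)^z)/(sz)$, and recognizing the two remaining $s$-integrals as the Beta values $B(z,1-t-z)$ and $B(z,1-t)$; the factor $\Gamma(z)/(z\Gamma(1+z))=1/z^2$ then produces exactly the claimed Gamma quotient. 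The delicate points are justifying the interchanges of summation and integration and tracking the real-part conditions for the Beta integrals, both of which are handled for small $t<0$ and then continued in $t$; this also makes clear that the argument is an analytic, generating-function route complementary to the $Z_U$-based approach.
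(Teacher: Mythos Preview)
Your proof is correct and takes a genuinely different route from the paper's.  The paper argues algebraically through the function $Z_U$: Proposition~\ref{prop.257} identifies the inner weighted sum with $Z_U\bigl(\{2\}^{r+1};\{1\}^m\bigr)$, the recursion~(\ref{eq.zur}) (equivalently Zlobin's identity~(\ref{eq.517})) expresses $\zeta^\star(\{1\}^m,\{2\}^{r+1})$ as an alternating sum of $\zeta(\{2\}^k)\zeta^\star(\{1\}^m,\{2\}^{r+1-k})$ with remainder $(-1)^r Z_U(\{2\}^{r+1};\{1\}^m)$, and then the orthogonality~(\ref{eq.one}) between $\zeta(\{2\}^k)$ and $\zeta^\star(\{2\}^p)$ is used to invert and isolate $\zeta^\star(\{1\}^m,\{2\}^{n+1})$.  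No generating functions, Gamma identities, or hypergeometric evaluations enter.

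Your approach instead packages both sides into a double generating function in $t$ and $z^2$ and reduces everything to the single closed-form identity
\[
\FTS{1}{1-z}{1+z}{2}{2-t}{1}=\frac{1-t}{z^2}\Bigl[1-\frac{\Gamma(1-t)^2}{\Gamma(1-t-z)\Gamma(1-t+z)}\Bigr],
\]
which you then derive via Euler's integral and two Beta evaluations; I checked the algebra and it goes through exactly as you outline (in particular $\Gamma(z)/(z\Gamma(1+z))=1/z^2$ and $\Gamma(2-t)=(1-t)\Gamma(1-t)$ produce the stated right-hand side).  The analytic justifications you flag (convergence of the ${}_3F_2$ requires $\Re(1-t)>0$, the Euler and Beta integrals require $\Re z>0$ and $\Re(1-t-z)>0$) are all satisfied for small $z>0$ and $t<0$, and coefficient extraction then gives the identity for all $m,n$.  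What you gain is a self-contained analytic proof that bypasses the $Z_U/Z_B/Z_L$ machinery and the external input of Proposition~\ref{prop.257}; what the paper's route gains is a structural explanation of \emph{why} the weighted MZV sum appears (it is literally $Z_U(\{2\}^{r+1};\{1\}^m)$), and the inversion via~(\ref{eq.one}) makes the convolution form of the right-hand side inevitable rather than a fortunate hypergeometric coincidence.
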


As applications of Theorem \ref{thm.4}, we give some interesting formulas. 
For any nonnegative integer $m$, we have

\begin{align*}
&\sum_{a+b=m}(a+1)(b+1)\zeta(a+2,b+2)\\
&\quad=(m+1)\zeta(2)\zeta(m+2)+(m+1)\zeta(2,m+2)-2\zeta(m+4)-2\zeta(m+1,3).
\end{align*}

and 

\[
\sum_{a+b=m-1}\zeta(2a+3)\zeta(2b+3)
=\frac{2m+1}2\zeta(2m+4)-2\zeta(1,2m+3).
\]

Moreover, we use the functions $Z_B$ and $Z_U$ to give 
antoher proof of sum formula of multiple zeta values in the final section.

\section{Preliminaries}
\subsection{Some Known Results of $Z_B$ and $Z_U$}
Here we list some preliminaries about symmetric functions.
For more details, we refer the reader to \cite{Mac1995, Stan1999}. 

A partition $\bmlambda$ of a nonnegative integer $n$ is a sequence 
$(\lambda_1,\ldots,\lambda_k)$ satisfying 
$\lambda_1\geq\cdots\geq\lambda_k$ and 
$|\bmlambda|=\sum^k_{i=1}\lambda_i=n$.
If $|\bmlambda|=n$, then we say that $\bmlambda$ is a partition of $n$,
and write $\bmlambda\vdash n$.
For a partition $\bmlambda$, we define 
$m_i=m_i(\bmlambda):=\#\{j\,:\,\lambda_j=i\}$. We write a partition $\bmlambda$
in the form of $\langle 1^{m_1}2^{m_2}\cdots\rangle$ and define

$$
\mu_{\bmlambda} = 1^{m_1}m_1!2^{m_2}m_2!\cdots.
$$

\noindent Let $\{x_1,x_2,\ldots\}$ be a set of indeterminates.
The complete homogeneous symmetric functions $H_{\bmlambda}$
is defined by

\begin{eqnarray*}
H_n&=&\sum_{i_1\leq\cdots\leq i_n}x_{i_1}\cdots x_{i_n}, \quad n\geq 1
\quad\mbox{(with $H_0=1$)}\\
H_\bmlambda&=&H_{\lambda_1}H_{\lambda_2}\cdots, \quad
\mbox{if }\bmlambda=(\lambda_1,\lambda_2,\ldots).
\end{eqnarray*}

\noindent The power sum symmetric functions $P_\bmlambda$ is defined by

\begin{eqnarray*}
P_n&=&\sum_{i}x_i^n, \quad n\geq 1
\quad\mbox{(with $P_0=1$)}\\
P_\bmlambda&=&P_{\lambda_1}P_{\lambda_2}\cdots, \quad
\mbox{if }\bmlambda=(\lambda_1,\lambda_2,\ldots).
\end{eqnarray*}

\noindent We can express the complete homogeneous symmetric functions $H_n$
in terms of the $P_\bmlambda$ \cite[(7.22)]{Stan1999}: 

$$
H_n=\sum_{\bmlambda\vdash n}\mu_\bmlambda^{-1}P_\bmlambda,
$$

Let $k_1\leq k_2\leq\cdots\leq k_{r+1}$ be an order sequence of $r+1$ positive integers.
We set $x_\ell=\ell^{-1}$, for $k_1\leq \ell\leq k_{r+1}$, and 
$x_\ell=0$, otherwise. Then the corresponding power-sum 
and complete homogeneous symmetric 
polynomials are 

\begin{align*}
p_m &=\sum_{k_1\leq j\leq k_{r+1}}j^{-m},\\
h_m &=\sum_{k_1\leq \ell_1\leq\ell_2\leq \cdots\leq\ell_m\leq k_{r+1}}
\frac1{\ell_1\ell_2\cdots\ell_m}
=\sum_{\bmlambda\vdash n}\mu_\bmlambda^{-1}p_\bmlambda.
\end{align*}

We reformulate a generalized duality theorem that 
appear in \cite{CCE2016} as the form we need.

\begin{prop}\cite[Theorem II]{CCE2016}\label{prop.163}
Let $\zeta(\alpha_1, \ldots, \alpha_q, \alpha_{q+1}+1)$ be 
a multiple zeta value of depth $q+1$ and weight $q+r+2$, with the dual
$\zeta(\beta_1,\ldots,\beta_r,\beta_{r+1}+1)$.
Then for any nonnegtaive integer $m$, we express

$$
\ZB{\beta_1,\ldots,\beta_r,\beta_{r+1}+1}{\{1\}^m}
$$

\noindent as

\begin{equation}\label{eq.07}
\sum_{|\bm{d}|=m}\prod_{j=1}^{q+1}\binom{\alpha_j+d_j-1}{d_j}
\zeta(\alpha_1+d_1, \ldots, 
\alpha_q+d_q, \alpha_{q+1}+d_{q+1}+1).
\end{equation}
\end{prop}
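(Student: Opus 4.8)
The plan is to introduce an auxiliary parameter $u$, assemble both sides of (\ref{eq.07}) into generating functions, and reduce the statement (asserted for each fixed $m$) to a single functional identity in $u$. The starting point is that the inner summation in $\ZB{\beta_1,\ldots,\beta_r,\beta_{r+1}+1}{\{1\}^m}$ is exactly the complete homogeneous symmetric polynomial $h_m$ in the variables $x_\ell=\ell^{-1}$, $k_1\le\ell\le k_{r+1}$, introduced above. Using the classical generating function $\sum_{m\ge0}h_mu^m=\prod_\ell(1-ux_\ell)^{-1}$, multiplying the left-hand side of (\ref{eq.07}) by $u^m$ and summing over $m$ collapses it to
\[
F(u):=\sum_{1\le k_1<\cdots<k_{r+1}}\frac{1}{k_1^{\beta_1}\cdots k_r^{\beta_r}k_{r+1}^{\beta_{r+1}+1}}\prod_{\ell=k_1}^{k_{r+1}}\frac{1}{1-u/\ell}.
\]

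On the right-hand side I would sum $u^{|\bm{d}|}$ over all $\bm{d}$ and interchange the order of summation. For each block the factor $\binom{\alpha_j+d_j-1}{d_j}$ pairs with the monomial $(u/n_j)^{d_j}$ produced by $\zeta(\alpha_1+d_1,\ldots,\alpha_{q+1}+d_{q+1}+1)$, and the negative binomial series $\sum_{d\ge0}\binom{\alpha+d-1}{d}z^d=(1-z)^{-\alpha}$ sums each $d_j$ in closed form. Since $n_j^{-\alpha_j}(1-u/n_j)^{-\alpha_j}=(n_j-u)^{-\alpha_j}$, the right-hand side becomes
\[
G(u):=\sum_{1\le n_1<\cdots<n_{q+1}}\frac{1}{n_{q+1}}\prod_{j=1}^{q+1}\frac{1}{(n_j-u)^{\alpha_j}}.
\]
Matching the coefficients of $u^m$, the proposition is then equivalent to the single identity $F(u)=G(u)$, whose constant term is exactly the classical duality $\zeta(\beta_1,\ldots,\beta_{r+1}+1)=\zeta(\alpha_1,\ldots,\alpha_{q+1}+1)$ of the two dual indices.

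To establish $F(u)=G(u)$ I would exhibit both series as one iterated integral over the simplex $1>t_1>\cdots>t_w>0$, $w=q+r+2$, built from $u$-deformed versions of the forms $\tfrac{dt}{t}$ and $\tfrac{dt}{1-t}$, arranged so that $G(u)$ reads off the word attached to $(\alpha_1,\ldots,\alpha_{q+1}+1)$ while $F(u)$ reads off the reversed-and-swapped word attached to its dual $(\beta_1,\ldots,\beta_{r+1}+1)$. The identity would then follow from the involution $t_i\mapsto1-t_{w+1-i}$, precisely the substitution behind the undeformed duality. I expect the main obstacle to be the design of the deformed forms: inserting a bare $t^{-u}$ lets the shift accumulate from block to block and fails to reproduce a clean factor $(n_j-u)^{-1}$ at every level, so one must compensate the shift across consecutive blocks (equivalently, make a change of variable that restores the parameter symmetrically) and check that the single undeformed factor coming from the final $+1$ is carried correctly by the involution. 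With the common integral in hand, extracting the coefficient of $u^m$ returns the stated finite sum.
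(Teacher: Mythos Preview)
The paper does not prove this proposition; it is quoted as Theorem~II of \cite{CCE2016} and invoked without argument. There is therefore no in-paper proof to compare your attempt against, and the assessment below is on the merits of your sketch alone.

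Your generating-function reduction is correct and is the natural first move. Multiplying by $u^m$ and summing over $m\ge0$, the inner $h_m$ on the left collapses via $\sum_{m\ge0}h_m u^m=\prod_{\ell=k_1}^{k_{r+1}}(1-u/\ell)^{-1}$ to your $F(u)$, and on the right each block sums by the negative binomial series to $(n_j-u)^{-\alpha_j}$, giving your $G(u)$. So the proposition for every $m$ is equivalent to the single identity $F(u)=G(u)$, whose constant term is classical duality. This is a clean and genuine simplification.

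What remains is the proof of $F(u)=G(u)$ itself, and here your proposal is only a plan. You correctly identify the obstruction: inserting a naive factor $t^{-u}$ at each $\omega_1$ makes the shift accumulate across blocks and does not give a clean $(n_j-u)^{-1}$ at every level, while exactly one denominator---the extra $1/n_{q+1}$ coming from the ``$+1$''---must remain unshifted. This difficulty is real but surmountable; one-parameter deformations of duality of precisely this shape underlie Ohno's relation, and can be realised by a Drinfeld integral carrying a single global deformation factor (rather than one per form), after which the involution $t_i\mapsto 1-t_{w+1-i}$ exchanges the two series expansions. The cited source \cite{CCE2016} establishes the result by integral methods of this general kind. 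Your strategy is therefore sound and in the same spirit as the original proof, but as written it stops short of producing the deformed integrand and verifying the two expansions; that step still has to be supplied before the argument is complete.
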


In 2015, the first author \cite{Chen2015} gave a expression of some special 
function $Z_U$. Simiarly, we can use the same trick as above to restate 
the expression we need.

\begin{prop}\cite[Theorem A]{Chen2015}\label{prop.257}
Let $q,r$ be a pair of positive integers, and $\zeta(\alpha_1,\ldots,\alpha_q)$
be a multiple zeta value of depth $q$ and weight $w=q+r$ with its dual being
$\zeta(\beta_1,\ldots,\beta_r)$ of depth $r$. Then for a nonnegative integer $m$,
we express

$$
\ZU{\beta_1,\ldots,\beta_r}{\{1\}^m}
$$

as

\begin{equation}\label{eq.209}
\sum_{|\bm{d}|=m}\zeta(\alpha_1+d_1,\ldots,\alpha_q+d_q)
\prod^q_{j=1}\binom{\alpha_j+d_j-1}{d_j}.
\end{equation}

\end{prop}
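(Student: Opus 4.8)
The plan is to recast the identity as a single functional identity in an auxiliary variable $t$ and then to prove that identity by the duality involution on iterated integrals, the same mechanism underlying Proposition~\ref{prop.163}. First I would multiply both sides by $t^m$ and sum over $m\ge 0$. On the left the inner sum is the complete homogeneous symmetric function $h_m$ evaluated at $x_\ell=\ell^{-1}$, $1\le\ell\le k_r$, so the relation $\sum_{m\ge 0}h_mt^m=\prod_{i=1}^{k_r}(1-t/i)^{-1}$ collapses the left-hand side to
\[
G(t)=\sum_{1\le k_1<\cdots<k_r}\ \prod_{j=1}^{r}k_j^{-\beta_j}\ \prod_{i=1}^{k_r}\frac{1}{1-t/i}.
\]
On the right the negative binomial series $\sum_{d\ge 0}\binom{\alpha+d-1}{d}z^{d}=(1-z)^{-\alpha}$, applied coordinatewise with $z=t/n_j$, turns $\sum_{m\ge 0}t^m\sum_{|\bm d|=m}(\cdots)$ into a $t$-shifted copy of $\zeta(\bm\alpha)$,
\[
R(t)=\sum_{1\le n_1<\cdots<n_q}\ \prod_{j=1}^{q}(n_j-t)^{-\alpha_j}.
\]
It therefore suffices to prove $G(t)=R(t)$ as power series convergent near $t=0$, and then to read off the coefficient of $t^m$.

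Next I would give each side a parameter-deformed iterated-integral (Drinfeld) representation over the simplex $1>u_1>\cdots>u_w>0$, where $w=q+r$ is the common weight of $\zeta(\bm\alpha)$ and $\zeta(\bm\beta)$. The shift $n_j\mapsto n_j-t$ is encoded by attaching a factor $u^{-t}$ to the forms $du/u$ that constitute the blocks of $\zeta(\bm\alpha)$, while the product $\prod_{i\le k_r}(1-t/i)^{-1}$ is encoded as the companion factor $(1-u)^{-t}$ on the forms $du/(1-u)$ that constitute the blocks of $\zeta(\bm\beta)$. The key point is that these are two writings of the same integral for the two dual words: the duality change of variables $u_i\mapsto 1-u_{w+1-i}$ reverses the word and interchanges $du/u\leftrightarrow du/(1-u)$, carrying the word of $\bm\beta$ to the word of $\bm\alpha$ and simultaneously carrying the deformation $(1-u)^{-t}$ to $u^{-t}$. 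Hence the two deformed integrals coincide, which gives $G(t)=R(t)$. Comparing coefficients of $t^m$ then yields the stated formula, since the coefficient of $t^m$ in $R(t)$ is exactly $\sum_{|\bm d|=m}\zeta(\alpha_1+d_1,\ldots,\alpha_q+d_q)\prod_{j}\binom{\alpha_j+d_j-1}{d_j}$ and the coefficient of $t^m$ in $G(t)$ is $\ZU{\beta_1,\ldots,\beta_r}{\{1\}^m}$.

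The main obstacle is this middle step: arranging the two parameter-deformed representations so that the single involution $u\mapsto 1-u$ visibly exchanges the deformation on the $du/u$ forms with that on the $du/(1-u)$ forms. This requires placing the $u^{-t}$ and $(1-u)^{-t}$ factors block by block so that admissibility is preserved (equivalently, in the usual convention, the word begins with $du/u$ and ends with $du/(1-u)$), and justifying the interchange of summation and integration together with convergence for small real $t$. I would also record the conceptual shortcut indicated in the introduction: $\ZU{\bm\beta}{\{1\}^m}$ is the integral attached to Yamamoto's $2$-poset given by the chain for $\bm\beta$ with an $m$-fold tail bounded above by its top vertex, and the identity is the poset-duality of that integral; since that machinery is not developed in the present paper, however, the generating-function route above is the one I would write out. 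Indeed the statement is essentially \cite[Theorem A]{Chen2015} after this reformulation, so one may alternatively invoke that result directly, exactly as Proposition~\ref{prop.163} is obtained from \cite{CCE2016}.
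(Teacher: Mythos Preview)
The paper does not prove this proposition at all: it is imported verbatim as \cite[Theorem~A]{Chen2015}, with only the remark that one can ``use the same trick as above to restate the expression we need.'' Your closing sentence---``one may alternatively invoke that result directly, exactly as Proposition~\ref{prop.163} is obtained from \cite{CCE2016}''---\emph{is} the paper's proof.

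Your generating-function route is nonetheless a legitimate independent argument, and the reduction to the single identity $G(t)=R(t)$ is set up correctly: the $h_m$-to-product step on the left and the negative-binomial step on the right are both valid, and $G(t)=R(t)$ is exactly the Hurwitz-type duality that underlies Ohno's relation. The one imprecision is in your description of the deformed integrals. You say you attach $u^{-t}$ to \emph{the} $du/u$ forms of $\zeta(\bm\alpha)$ and $(1-u)^{-t}$ to \emph{the} $du/(1-u)$ forms of $\zeta(\bm\beta)$, which reads as ``all of them''; that does not produce $R(t)$ or $G(t)$. The correct placement is a single factor: insert $u_w^{-t}$ at the innermost variable of the integral for $\zeta(\bm\alpha)$ (the last $du/(1-u)$), which yields $\prod_j (n_j-t)^{-\alpha_j}$ upon expansion; under $u_i\mapsto 1-u_{w+1-i}$ this becomes $(1-u_1)^{-t}$ at the outermost variable of the dual integral for $\zeta(\bm\beta)$, and the Beta evaluation $\int_0^1 u^{k-1}(1-u)^{-t}\,du=k^{-1}\prod_{i=1}^{k}(1-t/i)^{-1}$ then gives exactly $G(t)$. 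With that correction your ``main obstacle'' disappears and the proof goes through; it is in fact the standard mechanism behind \cite{Chen2015}, so your argument and the cited source are essentially the same approach.
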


\subsection{Some Results of Sums of Multiple Zeta(-star) Values of Height One}
Moreover we need some information about 
the following sums of multiple zeta(-star) values of height one:

\begin{alignat*}{3}
  Z_-(n)&= \sum_{a+b=n} (-1)^{b} \zeta(\{1\}^{a},b+2),
  &\quad\textrm{ and } \quad
  &&Z_+(n)&= \sum_{a+b=n} \zeta(\{1\}^{a},b+2),\\
  Z^\star_-(n)&=\sum_{a+b=n} (-1)^b \zeta^\star(\{1\}^a,b+2),
  &\quad\textrm{ and } \quad
  &&Z^\star_+(n)&= \sum_{a+b=n} \zeta^\star(\{1\}^a,b+2).
\end{alignat*}

\noindent The values of $Z_-(n)$ are given by \cite{LM1995, CE2021}

\begin{equation}\label{eq.zn}
Z_-(n)=\left\{\begin{array}{ll}
2(1-2^{-2m-1})\zeta(2m+2), & \mbox{ if } n=2m\in\mathbb Z_{\geq 0},\\
0, & \mbox{otherwise}.
\end{array}\right.
\end{equation}

\noindent The values of $Z_+^\star(n)$ are \cite{Ohno2005}

\begin{equation}\label{eq.zsp}
Z_+^\star(n)=2(n+1)(1-2^{-n-1})\zeta(n+2).
\end{equation}

\begin{prop}\cite[Corollary 5.4]{CE2021}\label{prop.333}
For any nonnegative integer $p$, we have

\begin{equation}\label{eq.10}
\sum_{m+n=p}Z_{-}(m)Z^\star_{+}(n)
=2((-1)^p-1)\zeta(p+2,\overline{2})+(p+2)(p+1+2^{-p-2})\zeta(p+4).
\end{equation}
\end{prop}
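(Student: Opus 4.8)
The plan is to start from the explicit evaluations (\ref{eq.zn}) and (\ref{eq.zsp}), which turn the left-hand side into a finite convolution of products of single Riemann zeta values. Since $Z_-(m)$ vanishes for odd $m$, only even first indices survive, so writing $m=2j$ and $n=p-2j$ I obtain
\begin{equation*}
\sum_{m+n=p}Z_{-}(m)Z^\star_{+}(n)
=4\sum_{j\geq 0}(p-2j+1)\bigl(1-2^{-2j-1}\bigr)\bigl(1-2^{-(p-2j)-1}\bigr)\zeta(2j+2)\zeta(p-2j+2).
\end{equation*}
It is convenient to absorb the $2$-power factors into the Dirichlet eta function $\eta(s)=(1-2^{1-s})\zeta(s)$, so that each factor becomes $Z_{-}(2j)=2\eta(2j+2)$ and $Z^\star_{+}(n)=2(n+1)\eta(n+2)$; the task then reduces to evaluating the weighted eta-convolution $4\sum_j(p-2j+1)\eta(2j+2)\eta(p-2j+2)$ of total weight $p+4$.

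Next I would convert the products into double zeta values. Expanding $\eta(a)=\sum_k(-1)^{k-1}k^{-a}$ and splitting the double sum according to $k<\ell$, $k=\ell$, $k>\ell$ gives the stuffle relation
\begin{equation*}
\eta(a)\eta(b)=\zeta(\overline{a},\overline{b})+\zeta(\overline{b},\overline{a})+\zeta(a+b).
\end{equation*}
Substituting this and summing over $j$ reduces everything, at the fixed weight $p+4$, to a multiple of the diagonal term $\zeta(p+4)$ together with a weighted sum of doubly-alternating double zeta values. The linear weight $(p-2j+1)$ breaks the $a\leftrightarrow b$ symmetry, and this asymmetry is precisely what produces a single surviving alternating double zeta value after the symmetric part telescopes.

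The decisive step is the evaluation of these two contributions. The diagonal terms $\sum_j(p-2j+1)\zeta(p+4)$, together with the symmetric part of the off-diagonal sum, I would collect using Euler's convolution identity for products of zeta values and its linearly-weighted analogue; for even $p$ all arguments are even, every product is a rational multiple of $\zeta(p+4)$, and one recovers the coefficient $(p+2)(p+1+2^{-p-2})$, while the alternating term drops out in accordance with $(-1)^p-1=0$. For odd $p$ the residual antisymmetric part must be reduced from the doubly-barred form $\zeta(\overline{a},\overline{b})$ to the canonical single-bar form $\zeta(p+2,\overline{2})$ via the double-shuffle and parity relations for alternating Euler sums; controlling this reduction, and in particular tracking the powers of $2$ coming from the eta normalisation so that the coefficient $2((-1)^p-1)$ emerges cleanly, is the main obstacle. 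An alternative that sidesteps the case analysis is the generating-function route: since $\sum_{n\geq 0}(n+1)\zeta(n+2)x^n=\psi'(1-x)$ and $\sum_{j\geq 1}\zeta(2j)x^{2j}=\tfrac12(1-\pi x\cot\pi x)$, the convolution is a coefficient extraction from an explicit product of trigamma and cotangent factors, whose partial-fraction expansion exposes the $\zeta(p+4)$ and $\zeta(p+2,\overline{2})$ pieces simultaneously.
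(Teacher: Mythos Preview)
The paper does not prove this proposition at all: it is quoted verbatim as \cite[Corollary~5.4]{CE2021} and used as a black box in the proof of Theorem~\ref{thm.03}. There is therefore no ``paper's own proof'' to compare against; any honest assessment has to weigh your sketch on its own merits.

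Your set-up is correct. The substitutions $Z_{-}(2j)=2\eta(2j+2)$ and $Z^\star_{+}(n)=2(n+1)\eta(n+2)$ are right, and the eta-stuffle $\eta(a)\eta(b)=\zeta(\overline{a},\overline{b})+\zeta(\overline{b},\overline{a})+\zeta(a+b)$ is valid. Where the proposal stops being a proof is exactly where you say so yourself: for odd $p$ you need to reduce the asymmetric combination $\sum_j(p-2j+1)\bigl[\zeta(\overline{2j+2},\overline{p-2j+2})+\zeta(\overline{p-2j+2},\overline{2j+2})\bigr]$ to a single $\zeta(p+2,\overline{2})$ plus a rational multiple of $\zeta(p+4)$. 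Invoking ``double-shuffle and parity relations for alternating Euler sums'' is not enough --- parity alone does not give a closed form for an individual $\zeta(\overline{a},\overline{b})$ when $a+b$ is even (which is the case here, since $a+b=p+4$), and the linear weight $(p-2j+1)$ means the standard symmetric Euler-type convolution formula does not apply directly. You would actually have to write down the weighted analogue of the alternating Euler decomposition and carry the bookkeeping through; as written, the argument has a genuine gap at precisely this step. The generating-function alternative you mention is more promising, but it too is only asserted, not executed: the claim that the partial-fraction expansion ``exposes the $\zeta(p+4)$ and $\zeta(p+2,\overline{2})$ pieces simultaneously'' is the whole content of the proposition and needs to be shown.

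For context, the cited source \cite{CE2021} obtains this identity from integral representations of the height-one sums rather than from term-by-term stuffle manipulations, which avoids the alternating double-zeta reduction you are stuck on.
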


\section{The Function $Z_B$}
\subsection{Definition of $Z_B$}
The function $Z_B$ is defined by 

\begin{align}\label{eq.defzb}
&\ZB{\alpha_1,\alpha_2,\ldots,\alpha_r}{\beta_1,\beta_2,\ldots,\beta_m}\\
&:=\sumb k_1^{-\alpha_1}k_2^{-\alpha_2}\cdots k_r^{-\alpha_r}
\ell_1^{-\beta_1}\ell_2^{-\beta_2}\cdots\ell_m^{-\beta_m},\nonumber
\end{align}

\noindent where $\alpha_r\geq 2$ for the sake of convergence.
The function $Z_B$ includes MZVs and MZSVs as special cases:

\begin{align}
\ZB{\alpha_1,\ldots,\alpha_r}{\emptyset}
&=\zeta(\alpha_1,\ldots,\alpha_r),\\
\ZB{\alpha_1,\alpha_2}{\beta_1,\ldots,\beta_m}
&=\zeta^\star(\alpha_1,\beta_1,\ldots,\beta_m,\alpha_2)
-\zeta(\alpha_1+\beta_1+\cdots+\beta_m+\alpha_2),\\
\ZB{\alpha_1}{\beta_1,\ldots,\beta_m}
&=\zeta(\alpha_1+\beta_1+\cdots+\beta_m).   \label{eq.b17}
\end{align}

\subsection{Proof of Theorem \ref{thm.01}}
We use the special case $r=1$, $q\in\mathbb Z_{\geq 0}$ 
in Proposition \ref{prop.163}.
That is, $\zeta(\alpha_1$, $\ldots$, $\alpha_q$, $\alpha_{q+1}+1)$ with its weight
$q+3=\alpha_1+\cdots+\alpha_{q+1}+1$, and its dual is 
$\zeta(\beta_1,\beta_2+1)$. And this implies that there exists
an unique $j$, $1\leq j\leq q+1$, $\alpha_j=2$, and $\alpha_i=1$, for all $i\neq j$,
we write $\bm\alpha_j=(\alpha_1,\ldots,\alpha_{q+1})$.
Therefore, we apply (\ref{eq.07}) in the following

\begin{align*}
&\zeta^\star(\beta_1,\{1\}^m,\beta_2+1)-\zeta(\beta_1+\beta_2+1+m)\\
&= \sum_{1\leq k_1<k_2}k_1^{-\beta_1}k_2^{-\beta_2-1}
\sum_{k_1\leq\ell_1\leq\cdots\leq\ell_m\leq k_2}
\frac1{\ell_1\ell_2\cdots\ell_m} =\ZB{\beta_1,\beta_2+1}{\{1\}^m}\\
&=\sum_{|\bm{d}|=m}(d_j+1)\zeta(d_1+1,\ldots,d_{j-1}+1,d_j+2,
d_{j+1}+1,\ldots,d_q+1,d_{q+2}+2) \\
&=\sum_{|\bm{\gamma}|=m+q+2}(\gamma_j-1)
\zeta(\gamma_1,\ldots,\gamma_q,\gamma_{q+1}+1).
\end{align*}

\noindent We add for all possible such parameters $\bm\alpha_j$,
that is we add for $1\leq j\leq q+1$ with $\bm\alpha_j$ 
on the right-hand side of the above equation 
and their corresponding dual $\bm\beta$ with $\beta_1+\beta_2=q+2$ on the left-hand side.
The above identity becomes

\begin{align*}
&\sum_{|\bm{\beta}|=q+2}\zeta^\star(\beta_1,\{1\}^m,\beta_2+1)
-(q+1)\zeta(q+3+m) \\
&\quad=\sum_{|\bm{\gamma}|=m+q+2}\zeta(\gamma_1,\ldots,\gamma_q,\gamma_{q+1}+1)
\sum^{q+1}_{j=1}(\gamma_j-1) 
=(m+1)\zeta(m+q+3).
\end{align*}

\noindent In the last equality we use the sum formula (\ref{eq.sum}) 
for multiple zeta values. Therefore, 

$$
\sum_{|\bm{\beta}|=q+2}\zeta^\star(\beta_1,\{1\}^m,\beta_2+1)
=(m+q+2)\zeta(m+q+3).
$$

\noindent This gives us the desired result. \qed

\subsection{Proof of Theorem \ref{thm.02}}
We reform the following sum as

\begin{align*}
\sum_{a+b=n}&\zeta^\star(a+2,\{1\}^m,b+2) \\
&=\sum_{\alpha_1+\alpha_2=n+2}\zeta^\star(\alpha_1+1,\{1\}^m,\alpha_2+1) \\
&=\sum_{\beta_1+\beta_2=n+3}
\zeta^\star(\beta_1,\{1\}^m,\beta_2+1)-\zeta^\star(\{1\}^{m+1},n+3).
\end{align*}

\noindent We have evaluated the above sum in Theorem \ref{thm.01}. Thus,

$$
\sum_{a+b=n}\zeta^\star(a+2,\{1\}^m,b+2) 
=(m+n+3)\zeta(m+n+4)-\zeta^\star(\{1\}^{m+1},n+3).
$$

\noindent Here we now evaluate our desired sum

\begin{align*}
\sum_{a+b+m=p}&\zeta^\star(a+2,\{1\}^m,b+2) \\
&=\sum_{m+n=p}\sum_{a+b=n}\zeta^\star(a+2,\{1\}^m,b+2)\\
&=(p+1)(p+3)\zeta(p+4)-\sum_{m+n=p}\zeta^\star(\{1\}^{m+1},n+3)\\
&=(p+1)(p+3)\zeta(p+4)+\zeta(p+4)+\zeta^\star(\{1\}^{p+2},2)\\
&\qquad\qquad\qquad\qquad\qquad\quad
-\sum_{a+b=p+2}\zeta^\star(\{1\}^a,b+2).
\end{align*}

\noindent The last sum $Z^\star_+(p+2)$ \cite{Ohno2005,CE2021} is stated in (\ref{eq.zsp})
and 

\[
\zeta^\star(\{1\}^{p+2},2)=(p+3)\zeta(p+4),
\] 

\noindent thus we conclude the result. \qed

\section{The Function $Z_L$}
\subsection{Definition of $Z_L$}
The function $Z_L$ is defined by

\begin{align}\label{eq.defzl}
&\ZL{\alpha_1,\alpha_2,\ldots,\alpha_r}{\beta_1,\beta_2,\ldots,\beta_m}\\
&:=\suml
k_1^{-\alpha_1}k_2^{-\alpha_2}\cdots k_r^{-\alpha_r}
\ell_1^{-\beta_1}\ell_2^{-\beta_2}\cdots\ell_m^{-\beta_m},\nonumber
\end{align}

\noindent where $\alpha_r\geq 2$ and $\beta_m\geq 2$ for the sake of convergence.
The function $Z_L$ includes MZVs and MZSVs as special cases:

\begin{align}\label{eq.zl0}
\ZL{\alpha_1,\alpha_2,\ldots,\alpha_r}{\emptyset}
&=\zeta(\alpha_1,\alpha_2,\ldots,\alpha_r),\\
\ZL{\alpha_1}{\beta_1,\beta_2,\ldots,\beta_m}
&=\zeta^\star(\alpha_1,\beta_1,\ldots,\beta_m).
\end{align}

For $\alpha_r\geq 2$ and $\beta_m\geq 2$, we multiply

\begin{align*}
\zeta(\alpha_1,\ldots,\alpha_r)&=\sum_{1\leq k_1<\cdots<k_r}
k_1^{-\alpha_1}\cdots k_r^{-\alpha_r},\\
\zeta^\star(\beta_1,\ldots,\beta_m)&=\sum_{1\leq\ell_1\leq\cdots\leq\ell_m}
\ell_1^{-\beta_1}\cdots\ell_m^{-\beta_m}
\end{align*}

\noindent together. We divide the resulted summation into 
two situations $k_1\leq\ell_1$ and $\ell_1<k_1$. Then 

\begin{align}\label{eq.414}
&\zeta(\alpha_1,\alpha_2,\ldots,\alpha_r)\
\zeta^\star(\beta_1,\beta_2,\ldots,\beta_m)\\
&=\ZL{\alpha_1,\alpha_2,\ldots,\alpha_r}{\beta_1,\beta_2,\ldots,\beta_m}+
\ZL{\beta_1,\alpha_1,\ldots,\alpha_r}{\beta_2,\beta_3,\ldots,\beta_m}.\nonumber
\end{align}

The above formula can give an identity with MZVs and MZSVs.

\begin{prop}\label{prop.428}
For an $r$-tuple $\bm\alpha=(\alpha_1,\alpha_2,\ldots,\alpha_r)$ of positive integers
with $\alpha_1\geq 2$, $\alpha_r\geq 2$, we have

\begin{align}\label{eq.415}
&\zeta(\alpha_1,\ldots,\alpha_r)+(-1)^r\zeta^\star(\alpha_r,\ldots,\alpha_1)\\
&=\sum^{r-1}_{k=1}(-1)^{k+1}\zeta^\star(\alpha_k,\ldots,\alpha_1)
\zeta(\alpha_{k+1},\ldots,\alpha_r).\nonumber
\end{align}
\end{prop}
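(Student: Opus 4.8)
The plan is to realize both sides of \eqref{eq.415} as one alternating telescoping sum of the auxiliary function $Z_L$, using the product decomposition \eqref{eq.414} as the only engine. For $k=0,1,\ldots,r-1$ I would introduce the family
\[
S_k:=\ZL{\alpha_{k+1},\ldots,\alpha_r}{\alpha_k,\alpha_{k-1},\ldots,\alpha_1},
\]
whose upper string records the tail $(\alpha_{k+1},\ldots,\alpha_r)$ in its natural order and whose lower string records the reversed head $(\alpha_k,\ldots,\alpha_1)$. The two extreme members are exactly the quantities on the left of \eqref{eq.415}: by the boundary evaluations $\ZL{\alpha_1,\ldots,\alpha_r}{\emptyset}=\zeta(\alpha_1,\ldots,\alpha_r)$ and $\ZL{\alpha_1}{\beta_1,\ldots,\beta_m}=\zeta^\star(\alpha_1,\beta_1,\ldots,\beta_m)$ one has $S_0=\zeta(\alpha_1,\ldots,\alpha_r)$ and $S_{r-1}=\zeta^\star(\alpha_r,\ldots,\alpha_1)$. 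The hypotheses $\alpha_1\geq 2$ and $\alpha_r\geq 2$ guarantee that every $S_k$ converges, since its upper string ends in $\alpha_r\geq 2$ and its lower string ends in $\alpha_1\geq 2$.

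Next I would feed \eqref{eq.414} with the $r-k$ arguments $(\alpha_{k+1},\ldots,\alpha_r)$ in the $\zeta$-slot and the $k$ reversed arguments $(\alpha_k,\ldots,\alpha_1)$ in the $\zeta^\star$-slot. Reading off the two $Z_L$ terms produced by the split $k_1\leq\ell_1$ versus $\ell_1<k_1$, the first term is precisely $S_k$, while the second term, in which the smallest index $\alpha_k$ is prepended to the upper string, is precisely $S_{k-1}$. This yields, for each $k=1,\ldots,r-1$, the two-term recursion
\[
\zeta(\alpha_{k+1},\ldots,\alpha_r)\,\zeta^\star(\alpha_k,\ldots,\alpha_1)=S_k+S_{k-1}.
\]

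Finally I would form the alternating combination $\sum_{k=1}^{r-1}(-1)^{k+1}(S_k+S_{k-1})$. After reindexing, all interior terms $S_1,\ldots,S_{r-2}$ cancel in consecutive pairs, leaving $S_0+(-1)^{r}S_{r-1}$, which is the left-hand side of \eqref{eq.415}; applying the same alternating combination to the right-hand members of the recursion reproduces exactly the sum on the right of \eqref{eq.415}. I expect the only delicate point to be the bookkeeping in the application of \eqref{eq.414}: one must verify that moving the head into the lower slot in reversed order and prepending $\alpha_k$ to the upper slot upon the passage $\ell_1<k_1$ returns $S_{k-1}$ with its indices in the stated order, and that the sign pattern in the telescoping isolates precisely $S_0$ and $(-1)^rS_{r-1}$. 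Once this identification is pinned down, the cancellation and the reindexing are routine.
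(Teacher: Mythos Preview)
Your proposal is correct and follows essentially the same route as the paper: both arguments introduce the family $S_k=\ZL{\alpha_{k+1},\ldots,\alpha_r}{\alpha_k,\ldots,\alpha_1}$, apply \eqref{eq.414} to obtain the two-term recursion $S_{k-1}+S_k=\zeta^\star(\alpha_k,\ldots,\alpha_1)\,\zeta(\alpha_{k+1},\ldots,\alpha_r)$, and then collapse it---the paper by induction on $k$, you by the equivalent alternating telescoping sum. The bookkeeping you flag (that prepending $\beta_1=\alpha_k$ to the upper string in \eqref{eq.414} really returns $S_{k-1}$, and that the alternating sum isolates $S_0+(-1)^r S_{r-1}$) is exactly as you describe and poses no difficulty.
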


\begin{proof}
Using (\ref{eq.414}), we have for $1\leq k\leq r$

$$
\ZL{\alpha_k,\ldots,\alpha_r}{\alpha_{k-1},\ldots,\alpha_1}
=\zeta^\star(\alpha_k,\ldots,\alpha_1)\zeta(\alpha_{k+1},\ldots,\alpha_r)
-\ZL{\alpha_{k+1},\ldots,\alpha_r}{\alpha_k,\ldots,\alpha_1}.
$$

\noindent Then we use the induction on $k$, we obtain that 
$\zeta(\alpha_1,\ldots,\alpha_r)$ is equal to 

$$
(-1)^k\ZL{\alpha_{k+1},\ldots,\alpha_r}{\alpha_k,\ldots,\alpha_1}
+\sum^k_{\ell=1}(-1)^{\ell-1}\zeta^\star(\alpha_\ell,\ldots,\alpha_1)
\zeta(\alpha_{\ell+1},\ldots,\alpha_r).
$$

The case $k=r-1$ is what we want.
\end{proof}

\subsection{Proof of Theorem \ref{thm.03}}
Applying (\ref{eq.415}) we rewrite

\begin{align*}
(-1)^m\zeta(c+2,\{1\}^m,d+2)&+\zeta^\star(d+2,\{1\}^m,c+2)\\
&=\sum_{k+\ell=m}(-1)^\ell\zeta^\star(\{1\}^k,c+2)\zeta(\{1\}^\ell,d+2).
\end{align*}

\noindent Therefore,

\begin{align*}
\sum_{c+d+m=p}&\left[(-1)^m\zeta(c+2,\{1\}^m,d+2)
+\zeta^\star(d+2,\{1\}^m,c+2)\right] \\
&=\sum_{c+d+k+\ell=p}(-1)^\ell\zeta^\star(\{1\}^k,c+2)\zeta(\{1\}^\ell,d+2).
\end{align*}

\noindent The above sum can be rewritten as 

\begin{align*}
\sum_{m+n=p}\sum_{k+c=m}&\zeta^\star(\{1\}^k,c+2)
\sum_{\ell+d=n}(-1)^\ell\zeta(\{1\}^\ell,d+2)\\
&=\sum_{m+n=p}(-1)^nZ^\star_+(m)Z_-(n).
\end{align*}

Since $Z_-(2n-1)=0$, for any positive integer $n$, the last sum 
is exactly (\ref{eq.10}). Thus,

\begin{align*}
\sum_{c+d+m=p}&\left[(-1)^m\zeta(c+2,\{1\}^m,d+2)
+\zeta^\star(d+2,\{1\}^m,c+2)\right] \\
&=2((-1)^p-1)\zeta(p+2,\overline{2})+(p+2)(p+1+2^{-p-2})\zeta(p+4).
\end{align*}

\noindent On the other hand, Theorem \ref{thm.02} gives 

$$
\sum_{c+d+m=p}\zeta^\star(d+2,\{1\}^m,c+2)
=\left(p^2+3p+1+\frac{p+3}{2^{p+2}}\right)\zeta(p+4).
$$

\noindent Combine these results together, we have the desired result. \qed

\vskip 0.5cm
Similarly, we use the same trick in the proof, that is, we apply
(\ref{eq.415}) to transform the following summation of multiple zeta(-star) values with height two
into a convolution of sums of multiple zeta(-star) values with height one:

\begin{align*}
&\sum_{c+d+m=p}\left[\zeta(c+2,\{1\}^m,d+2)
+(-1)^m\zeta^\star(d+2,\{1\}^m,c+2)\right] \\
&\qquad\qquad\qquad=\sum_{m+n=p}(-1)^mZ_+(m)Z_-^\star(n).
\end{align*}

\section{The Function $Z_U$}
\subsection{Definition of $Z_U$}
The function $Z_U$ is defined by 

\begin{align}\label{eq.defzu}
&\ZU{\alpha_1,\alpha_2,\ldots,\alpha_r}{\beta_1,\beta_2,\ldots,\beta_m}\\
&:=\sumu
k_1^{-\alpha_1}k_2^{-\alpha_2}\cdots k_r^{-\alpha_r}
\ell_1^{-\beta_1}\ell_2^{-\beta_2}\cdots\ell_m^{-\beta_m},\nonumber
\end{align}

\noindent where $\alpha_r\geq 2$ for the sake of convergence.
The function $Z_U$ includes MZVs and MZSVs as special cases:

\begin{align}
\ZU{\alpha_1,\ldots,\alpha_r}{\emptyset}
&=\zeta(\alpha_1,\ldots,\alpha_r),\\
\ZU{\alpha_1}{\beta_1,\ldots,\beta_m}
&=\zeta^\star(\beta_1,\ldots,\beta_m,\alpha_1).
\end{align}

For $\alpha_r\geq 2$ and $\beta_m\geq 2$, we multiply

\begin{align*}
\zeta(\alpha_1,\ldots,\alpha_r)&=\sum_{1\leq k_1<\cdots<k_r}
k_1^{-\alpha_1}\cdots k_r^{-\alpha_r},\\
\zeta^\star(\beta_1,\ldots,\beta_m)&=\sum_{1\leq\ell_1\leq\cdots\leq\ell_m}
\ell_1^{-\beta_1}\cdots\ell_m^{-\beta_m}
\end{align*}

\noindent together. We divide the resulted summation into 
two situations $k_r\geq\ell_m$ and $\ell_m>k_r$. Then 

\begin{align}\label{eq.516}
&\zeta(\alpha_1,\alpha_2,\ldots,\alpha_r)\
\zeta^\star(\beta_1,\beta_2,\ldots,\beta_m) \\
&=\ZU{\alpha_1,\alpha_2,\ldots,\alpha_r}{\beta_1,\beta_2,\ldots,\beta_m}+
\ZU{\alpha_1,\ldots,\alpha_r,\beta_m}{\beta_1,\ldots,\beta_{m-1}}. \nonumber
\end{align}

The above formula can give an identity which was proved by Zlobin in 2005.

\begin{prop}\cite[Theorem 3]{Zlobin2005}\label{prop.558}
For an $r$-tuple $\bm\alpha=(\alpha_1,\alpha_2,\ldots,\alpha_r)$ of positive integers
with $\alpha_i\geq 2$, for $1\leq i\leq r$, we have

\begin{align}\label{eq.517}
\zeta^\star(\alpha_r,\ldots,\alpha_1)&+(-1)^r\zeta(\alpha_1,\ldots,\alpha_r)\\
&=\sum^{r-1}_{k=1}(-1)^{k+1}
\zeta(\alpha_{1},\ldots,\alpha_{k})
\zeta^\star(\alpha_r,\ldots,\alpha_{k+1}).\nonumber
\end{align}
\end{prop}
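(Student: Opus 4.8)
Proposition 5.8 asserts the Zlobin-type identity

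\[
\zeta^\star(\alpha_r,\ldots,\alpha_1)+(-1)^r\zeta(\alpha_1,\ldots,\alpha_r)
=\sum_{k=1}^{r-1}(-1)^{k+1}\zeta(\alpha_1,\ldots,\alpha_k)\,\zeta^\star(\alpha_r,\ldots,\alpha_{k+1}),
\]

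and I would prove it in exact parallel with the proof of Proposition~4.5 (equation~\eqref{eq.415}), using $Z_U$ in place of $Z_L$. The plan is to exploit the product-splitting formula~\eqref{eq.516}, which decomposes $\zeta(\alpha_1,\ldots,\alpha_r)\,\zeta^\star(\beta_1,\ldots,\beta_m)$ into two $Z_U$-values according to whether $k_r\geq\ell_m$ or $\ell_m>k_r$. The strategy is to feed the sequence $(\beta_1,\ldots,\beta_m)$ through this relation by peeling off one entry $\beta_m$ at a time, producing a telescoping recursion whose accumulated boundary terms are precisely the claimed sum.

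Concretely, I would set up the induction on the length of the lower (reversed) sequence. Applying~\eqref{eq.516} with the roles arranged so that the upper row carries $(\alpha_1,\ldots,\alpha_r)$ and the lower row carries a tail of the reversed $\bm\alpha$, one gets for each $k$ the recurrence

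\[
\ZU{\alpha_1,\ldots,\alpha_k}{\alpha_r,\ldots,\alpha_{k+1}}
=\zeta(\alpha_1,\ldots,\alpha_k)\,\zeta^\star(\alpha_r,\ldots,\alpha_{k+1})
-\ZU{\alpha_1,\ldots,\alpha_k,\alpha_{k+1}}{\alpha_r,\ldots,\alpha_{k+2}}.
\]

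Iterating this identity from $k=r-1$ down to the base case, the $Z_U$-terms cancel in alternating fashion, and the leftover product terms assemble, with signs $(-1)^{k+1}$, into the right-hand side of~\eqref{eq.517}. The two extreme cases of the telescope supply the left-hand side: the fully-saturated upper row $\ZU{\alpha_1,\ldots,\alpha_r}{\emptyset}=\zeta(\alpha_1,\ldots,\alpha_r)$ and the single-entry upper row $\ZU{\alpha_1}{\alpha_r,\ldots,\alpha_2}=\zeta^\star(\alpha_r,\ldots,\alpha_2,\alpha_1)$, which together with the $(-1)^r$ sign bookkeeping yield the two boundary zeta(-star) values.

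The only delicate point, and the step I expect to require care, is the sign and index bookkeeping across the telescope: tracking how the parity factor $(-1)^k$ attached to each surviving $Z_U$-term converts into the final $(-1)^r$ on $\zeta(\alpha_1,\ldots,\alpha_r)$ and the $+1$ on $\zeta^\star(\alpha_r,\ldots,\alpha_1)$, while keeping the reversal of the lower-row arguments consistent at every stage. Since all arguments satisfy $\alpha_i\geq 2$, every $Z_U$-value appearing is absolutely convergent, so no regularization is needed and the termwise manipulations are unconditionally valid; the argument is thus the mirror image of the $Z_L$ computation in Proposition~4.5, with the upper bound $k_r\geq\ell_m$ replacing the lower bound $k_1\leq\ell_1$.
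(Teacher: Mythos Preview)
Your proposal is correct and follows essentially the same argument as the paper: both use the splitting formula~\eqref{eq.516} to derive the one-step recurrence
\[
\ZU{\alpha_1,\ldots,\alpha_k}{\alpha_r,\ldots,\alpha_{k+1}}
=\zeta(\alpha_1,\ldots,\alpha_k)\,\zeta^\star(\alpha_r,\ldots,\alpha_{k+1})
-\ZU{\alpha_1,\ldots,\alpha_{k+1}}{\alpha_r,\ldots,\alpha_{k+2}},
\]
and then telescope between the endpoints $\ZU{\alpha_1}{\alpha_r,\ldots,\alpha_2}=\zeta^\star(\alpha_r,\ldots,\alpha_1)$ and $\ZU{\alpha_1,\ldots,\alpha_r}{\emptyset}=\zeta(\alpha_1,\ldots,\alpha_r)$. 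The only cosmetic difference is that the paper phrases the iteration as an induction starting from $\zeta^\star(\alpha_r,\ldots,\alpha_1)$ and increasing $k$, whereas you describe running the telescope from $k=r-1$ downward; the content is identical.
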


\begin{proof}
Using (\ref{eq.516}), we have for $1\leq k\leq r$

$$
\ZU{\alpha_{1},\ldots,\alpha_{k-1}}{\alpha_r,\ldots,\alpha_k}
=\zeta^\star(\alpha_r,\ldots,\alpha_k)\zeta(\alpha_{1},\ldots,\alpha_{k-1})
-\ZU{\alpha_1,\ldots,\alpha_k}{\alpha_{r},\ldots,\alpha_{k+1}}.
$$

\noindent Then we use the induction on $k$, we obtain
that $\zeta^\star(\alpha_r,\ldots,\alpha_1)$ is equal to 

\begin{equation}\label{eq.zur}
(-1)^k\ZU{\alpha_{1},\ldots,\alpha_{k+1}}{\alpha_r,\ldots,\alpha_{k+2}}
+\sum^k_{\ell=1}(-1)^{\ell-1}
\zeta(\alpha_{1},\ldots,\alpha_{\ell})
\zeta^\star(\alpha_r,\ldots,\alpha_{\ell+1}).
\end{equation}

The case $k=r-1$ is what we want.
\end{proof}

\subsection{Proof of Theorem \ref{thm.4}}
Applying Proposition \ref{prop.257} to the multiple zeta value
$\zeta(\{2\}^{r+1})$, we have

\[
\ZU{\{2\}^{r+1}}{\{1\}^m}
=\sum_{|{\bm d}|=m}\zeta(d_1+2,\ldots,d_{r+1}+2)\prod^{r+1}_{j=1}(d_j+1).
\]

Using Eq.\,(\ref{eq.zur}) we can write $\zeta^\star(\{1\}^m,\{2\}^{r+1})$ as

\[
(-1)^{r}\ZU{\{2\}^{r+1}}{\{1\}^m}
+\sum^{r}_{k=1}(-1)^{k-1}\zeta(\{2\}^k)\zeta^\star(\{1\}^m,\{2\}^{r+1-k}).
\]

Combing these two equations together, we have

\begin{align}\label{eq.519}
\sum_{q+k=r}&(-1)^k\zeta(\{2\}^k)\zeta^\star(\{1\}^m,\{2\}^{q+1})\\
&=(-1)^{r}\sum_{|\bm d|=m}\zeta(d_1+2,\ldots,d_{r+1}+2)
\prod^{r+1}_{j=1}(d_j+1).\nonumber
\end{align}

Multiplying Eq.\,(\ref{eq.519}) with $\zeta^\star(\{2\}^p)$
and taking summation with $p+r=n$ for all nonnegative integer $p$.
Then  the left-hand side of the above equation becomes

\begin{align*}
&\sum_{p+q+k=n}(-1)^k\zeta(\{2\}^k)\zeta^\star(\{2\}^p)\zeta^\star(\{1\}^m,\{2\}^{q+1}) \\
&\quad=\sum_{s+q=n}\left(\sum_{k+p=s}(-1)^k\zeta(\{2\}^k)\zeta^\star(\{2\}^p)\right)
\zeta^\star(\{1\}^m,\{2\}^{q+1})\\
&\quad=\zeta^\star(\{1\}^m,\{2\}^{n+1}).
\end{align*}

We use the following known result in the last equation.

\begin{equation}\label{eq.one}
\sum_{a+b=n}(-1)^a\zeta(\{s\}^a)\zeta^\star(\{s\}^b)
=\left\{\begin{array}{ll}
1, & \mbox{ if }n=0,\\
0, & \mbox{ if }n\neq 0.
\end{array}\right.
\end{equation}

Therefore we can conclude Theorem \ref{thm.4}. \qed
\section{Some Applications of Theorem \ref{thm.4}}
\subsection{The Cases $n=0, 1$}
We use Eq.\,(\ref{eq.1m2n}) to give some formulas. For the case $n=0$, we have

\[
\zeta^\star(\{1\}^m,2)=(m+1)\zeta(m+2).
\]

This formula is a special case in \cite[Theorem 4]{Zlobin2005}.

For the case $n=1$ in Eq\,(\ref{eq.1m2n}), we have

\[
\zeta^\star(\{1\}^m,2,2)
=(m+1)\zeta(2)\zeta(m+2)-\sum_{a+b=m}(a+1)(b+1)\zeta(a+2,b+2).
\]

However, there is another expression of $\zeta^\star(\{1\}^m,2,2)$ 
given by \cite[Theorem 4]{Zlobin2005}:

\[
\zeta^\star(\{1\}^m,2,2)
=2\zeta(m+4)+2\zeta(m+1,3)-(m+1)\zeta(2,m+2).
\]

Combine these two identities, we obtain a weighted sum formula:

\begin{prop}
For any nonnegative integer $m$, we have

\begin{align*}
&\sum_{a+b=m}(a+1)(b+1)\zeta(a+2,b+2)\\
&\quad=(m+1)\zeta(2)\zeta(m+2)+(m+1)\zeta(2,m+2)-2\zeta(m+4)-2\zeta(m+1,3).
\end{align*}
\end{prop}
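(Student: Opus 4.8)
The plan is to evaluate $\zeta^\star(\{1\}^m,2,2)$ in two independent ways and equate the results, thereby isolating the desired weighted double sum. The first expression comes from specializing Theorem \ref{thm.4} to $n=1$; the second is a known formula of Zlobin. Subtracting the two then yields the proposition with no further input.

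First I would set $n=1$ in Eq.\,(\ref{eq.1m2n}) and expand the outer sum $\sum_{p+r=1}$ into its two summands, corresponding to $(p,r)=(1,0)$ and $(p,r)=(0,1)$. For $(p,r)=(1,0)$ the inner multi-index sum has a single component forced to $d_1=m$, so it collapses to $(m+1)\zeta(m+2)$; together with $\zeta^\star(\{2\}^1)=\zeta(2)$ and the sign $(-1)^0=1$ this contributes $(m+1)\zeta(2)\zeta(m+2)$. For $(p,r)=(0,1)$ we have $\zeta^\star(\{2\}^0)=1$, the sign is $(-1)^1=-1$, and the inner sum runs over $d_1+d_2=m$, producing $-\sum_{a+b=m}(a+1)(b+1)\zeta(a+2,b+2)$. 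This gives the representation
\[
\zeta^\star(\{1\}^m,2,2)=(m+1)\zeta(2)\zeta(m+2)-\sum_{a+b=m}(a+1)(b+1)\zeta(a+2,b+2).
\]

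Next I would invoke the alternative evaluation $\zeta^\star(\{1\}^m,2,2)=2\zeta(m+4)+2\zeta(m+1,3)-(m+1)\zeta(2,m+2)$ from \cite[Theorem 4]{Zlobin2005}, which is derived independently of Theorem \ref{thm.4}. Equating this with the representation above and solving for the weighted double sum produces exactly the claimed identity.

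I do not expect a genuine obstacle here, since both closed forms for $\zeta^\star(\{1\}^m,2,2)$ are already available; the proof is a matter of careful bookkeeping. The only point requiring attention is the correct reading of the two summands in the $n=1$ specialization of Eq.\,(\ref{eq.1m2n}): the $r=0$ term must be interpreted as a single depth-one zeta value with the factor $(m+1)$ coming from $d_1+1$, while the $r=1$ term contributes the genuine depth-two weighted sum over $d_1+d_2=m$. Once these are matched correctly against the monomials in Zlobin's formula, the final algebra is immediate.
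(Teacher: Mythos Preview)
Your proposal is correct and follows essentially the same route as the paper: specialize Theorem~\ref{thm.4} to $n=1$ to obtain one expression for $\zeta^\star(\{1\}^m,2,2)$, invoke Zlobin's formula \cite[Theorem 4]{Zlobin2005} for a second expression, and equate. Your bookkeeping of the two summands $(p,r)=(1,0)$ and $(p,r)=(0,1)$ is accurate and matches the paper's derivation exactly.
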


The basic stuffle product relation is 

\begin{equation}\label{eq.stuffle}
\zeta(a)\zeta(b)=\zeta(a,b)+\zeta(b,a)+\zeta(a+b).
\end{equation}

Using this equation we get

\begin{align*}
&\sum_{a+b=m}(a+1)(b+1)\zeta(a+2)\zeta(b+2)\\
&\quad=\frac{(m+1)(m+2)(m+3)}6\zeta(m+4)+2(m+1)\zeta(2)\zeta(m+2)\\
&\qquad+2(m+1)\zeta(2,m+2)-4\zeta(3)\zeta(m+1)+4\zeta(3,m+1).
\end{align*}

There is another expression given by the second author and Yang \cite{EY2015}:

\begin{align*}
&\frac12\sum_{a+b=m}(a+1)(b+1)\zeta(a+2)\zeta(b+2)\\
&\quad=\sum_{\alpha+\beta=m}2^\beta(\alpha+1)(\beta+1)\zeta(\alpha+2,\beta+2) \\
&\qquad+\sum_{\alpha+\beta=m}2^\beta(\beta+1)(\beta+2)\zeta(\alpha+1,\beta+3).
\end{align*}

\subsection{The Cases $m=1,2,3$}
If we let $m=1$ in Eq.\,(\ref{eq.1m2n}), we have

\[
\zeta^\star(1,\{2\}^{n+1})
=2\sum_{p+r=n}(-1)^r\zeta^\star(\{2\}^p)
\sum_{a+b=r}\zeta(\{2\}^a,3,\{2\}^b).
\]

The inner summation of the above sum is given by Zagier \cite{Zagier2012}:

\[
\sum_{a+b=r}\zeta(\{2\}^a,3,\{2\}^b)
=\sum_{a+b=r}(-1)^a\zeta(\{2\}^b)\zeta(2a+3).
\]

Therefore 

\begin{align*}
\zeta^\star(1,\{2\}^{n+1})
&=2\sum_{p+a+b=n}(-1)^b\zeta^\star(\{2\}^p)\zeta(\{2\}^b)\zeta(2a+3)\\
&=2\sum_{\ell+a=n}\left(\sum_{p+b=\ell}(-1)^b\zeta^\star(\{2\}^p)\zeta(\{2\}^b)\right)
\zeta(2a+3).
\end{align*}

We apply Eq\,(\ref{eq.one}) and get (see \cite{Zlobin2005})

\[
\zeta^\star(1,\{2\}^{n+1})
=2\zeta(2n+3).
\]

For the case $m=2$ of Eq.\,(\ref{eq.1m2n}), we have

\[
\zeta^\star(1,1,\{2\}^{n+1})
=\sum_{p+r=n}(-1)^r\zeta^\star(\{2\}^p)
\sum_{|\bm d|=2}\zeta(d_1+2,\ldots,d_{r+1}+2)\prod^{r+1}_{j=1}(d_j+1).
\]

In fact, the inner summation is

\[
3\sum_{a+b=r}\zeta(\{2\}^a,4,\{2\}^b)
+4\sum_{a+b+c=r-1}\zeta(\{2\}^a,3,\{2\}^b,3,\{2\}^c).
\]

Consider the generating functions of 
$\zeta(\{s\}^{a_0},r_1,\{s\}^{a_1},\ldots,r_n,\{s\}^{a_n})$, 
it is easy to get the following identity (ref. \cite[Proposition 6.5]{CE2021x})

\begin{align}\label{eq.rs}
\sum_{|\bm a|=m}&\zeta(\{s\}^{a_0},r_1,\{s\}^{a_1},
\ldots,r_n,\{s\}^{a_n})\\
&=\sum_{|\bm a|=m}(-1)^{m-a_0}
\zeta(\{s\}^{a_0}) \zeta(sa_1+r_1,\ldots,sa_n+r_n).\nonumber
\end{align}

Using Eq.\,(\ref{eq.rs}) and Eq.\,(\ref{eq.one}) we can simplify
the expression of $\zeta^\star(1$, $1$, $\{2\}^{n+1})$ as follows.

\[
3\zeta(2n+4)-4\sum_{a+b=n-1}\zeta(2a+3,2b+3).
\]

Using the stuffle product relation Eq.\,(\ref{eq.stuffle}), we have

\begin{equation}\label{eq.112n}
\zeta^\star(1,1,\{2\}^{n+1}) = (2n+3)\zeta(2n+4)
-2\sum_{a+b=n-1}\zeta(2a+3)\zeta(2b+3).
\end{equation}

Ohno and Zudilin gave another expression \cite[Lemma 5]{OZ2008}

\[
\zeta^\star(1,1,\{2\}^{n+1})
=4\zeta^\star(1,2n+3)-2\zeta(2n+4).
\]

Combing theses two expressions, we have the following proposition.

\begin{prop}
For any nonnegative integer $n$, we have

\begin{equation}
\sum_{a+b=n-1}\zeta(2a+3)\zeta(2b+3)
=\frac{2n+1}2\zeta(2n+4)-2\zeta(1,2n+3).
\end{equation}
\end{prop}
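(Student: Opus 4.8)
The plan is to combine the two expressions for $\zeta^\star(1,1,\{2\}^{n+1})$ that have just been assembled. On one side, equation~(\ref{eq.112n}), obtained above via Theorem~\ref{thm.4} together with~(\ref{eq.rs}),~(\ref{eq.one}), and the stuffle relation~(\ref{eq.stuffle}), reads
\[
\zeta^\star(1,1,\{2\}^{n+1}) = (2n+3)\zeta(2n+4) - 2\sum_{a+b=n-1}\zeta(2a+3)\zeta(2b+3),
\]
while the Ohno--Zudilin evaluation \cite[Lemma 5]{OZ2008} gives
\[
\zeta^\star(1,1,\{2\}^{n+1}) = 4\zeta^\star(1,2n+3) - 2\zeta(2n+4).
\]
First I would equate the two right-hand sides and isolate the sum $S := \sum_{a+b=n-1}\zeta(2a+3)\zeta(2b+3)$. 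A one-line rearrangement then yields
\[
S = \frac{2n+5}{2}\,\zeta(2n+4) - 2\,\zeta^\star(1,2n+3).
\]

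It remains to convert the star value $\zeta^\star(1,2n+3)$ into the ordinary double zeta value appearing in the statement. Splitting the defining sum $\zeta^\star(1,2n+3) = \sum_{1\le k_1\le k_2} k_1^{-1}k_2^{-(2n+3)}$ according to whether $k_1<k_2$ or $k_1=k_2$ produces the elementary identity
\[
\zeta^\star(1,2n+3) = \zeta(1,2n+3) + \zeta(2n+4).
\]
Substituting this into the formula for $S$ and collecting the two $\zeta(2n+4)$ contributions, via $\tfrac{2n+5}{2}-2 = \tfrac{2n+1}{2}$, gives exactly
\[
S = \frac{2n+1}{2}\,\zeta(2n+4) - 2\,\zeta(1,2n+3),
\]
which is the desired identity.

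There is no genuine obstacle here, since both input expressions are already in hand (one derived in the preceding paragraphs, the other quoted from \cite{OZ2008}); the argument is purely linear, living in the two-dimensional space spanned by $\zeta(2n+4)$ and a fixed double zeta value. The only steps needing a moment's care are the star-to-non-star reduction above and the coefficient arithmetic. As a sanity check I would verify the boundary case $n=0$: the left-hand sum is empty, so $S=0$, and the right-hand side becomes $\tfrac12\zeta(4) - 2\zeta(1,3)$, which indeed vanishes since $\zeta(1,3)=\tfrac14\zeta(4)$.
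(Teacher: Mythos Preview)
Your proposal is correct and follows exactly the paper's approach: equate~(\ref{eq.112n}) with the Ohno--Zudilin expression and solve. You have in fact supplied more detail than the paper, which merely writes ``Combining these two expressions, we have the following proposition'' and leaves the star-to-non-star reduction $\zeta^\star(1,2n+3)=\zeta(1,2n+3)+\zeta(2n+4)$ and the coefficient arithmetic implicit.
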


The second author and Yang \cite[Remark 4]{EY2015} gave a similar identity:

\begin{equation}
\sum_{a+b=n}\zeta(a+2)\zeta(b+2)
=(n+3)\zeta(n+4)-2\zeta(1,n+3).
\end{equation}

Similarly, we have the following identity for the case $m=3$.

\begin{align*}
\zeta^\star(\{1\}^3,\{2\}^{n+1}) &=\frac{2}3(2n+3)(n+2)\zeta(2n+5)\\
&\quad-6\sum_{a+b=n-1}\zeta(2a+4)\zeta(2b+3) \\
&\quad-4\sum_{a+b=n-2}(a+1)\zeta(2a+6)\zeta(2b+3)\\
&\quad+\frac43\sum_{a+b+c=n-2}\zeta(2a+3)\zeta(2b+3)\zeta(2c+3).
\end{align*}

\section{Another Proof of Sum Formula}
\subsection{Relations Between $Z_B$, $Z_U$, and $Z_L$}
We divide the summation in the definition of $Z_U$ into two cases:
$k_1\leq\ell_1$ and $\ell_1<k_1$. Then we have

\begin{equation}\label{eq.zutozb}
\ZU{\alpha_1,\ldots,\alpha_r}{\beta_1,\ldots,\beta_m}
=\ZB{\alpha_1,\ldots,\alpha_r}{\beta_1,\ldots,\beta_m}
+\ZB{\beta_1,\alpha_1,\ldots,\alpha_r}{\beta_2,\beta_3,\ldots,\beta_m}.
\end{equation}

We use this formula recursively for $p\geq 1$, we get

\begin{align}\label{eq.zubr}
&\ZB{\alpha_{m+1},\ldots,\alpha_r}{\alpha_m,\ldots,\alpha_1}\\
&=(-1)^p\ZB{\alpha_{m+1-p},\ldots,\alpha_r}{\alpha_{m-p},\ldots,\alpha_1}
+\sum^p_{k=1}(-1)^{k-1}
\ZU{\alpha_{m+2-k},\ldots,\alpha_r}{\alpha_{m+1-k},\ldots,\alpha_1}.\nonumber
\end{align}

If we set $p=m$ in Eq\,(\ref{eq.zubr}), then we have the following identity.
\begin{prop}\label{prop.zuzbzeta}
For $\alpha_r\geq 2$ and $1\leq m\leq r$, we have

\begin{align}\label{eq.zuzbzeta}
&\ZB{\alpha_{m+1},\ldots,\alpha_r}{\alpha_m,\ldots,\alpha_1}\\
&\quad=(-1)^m\zeta(\alpha_1,\ldots,\alpha_r)+\sum^m_{k=1}(-1)^{m-k}
\ZU{\alpha_{k+1},\ldots,\alpha_r}{\alpha_k,\ldots,\alpha_1}.\nonumber
\end{align}

\end{prop}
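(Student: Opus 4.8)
The plan is to obtain the proposition as the single specialization $p=m$ of the recursion (\ref{eq.zubr}), so that the whole proof reduces to evaluating the two terms of (\ref{eq.zubr}) at that endpoint and matching them against the claimed right-hand side. Since (\ref{eq.zubr}) is already established, I would simply set $p=m$ and then perform two pieces of bookkeeping.

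For a self-contained account I would first recall why (\ref{eq.zubr}) holds. Its engine is (\ref{eq.zutozb}), which I would prove by splitting the defining sum of the $Z_U$ on the left according to whether $k_1\le\ell_1$ or $\ell_1<k_1$. On the region $k_1\le\ell_1$ the constraint becomes $k_1\le\ell_1\le\cdots\le\ell_m\le k_r$, which is exactly a $Z_B$; on the region $\ell_1<k_1$ the index $\ell_1$ drops below $k_1$ and becomes a new strict minimum, so that $\ell_1<k_1<\cdots<k_r$ is a fresh increasing top string and $\ell_2,\ldots,\ell_m$ remain sandwiched between $\ell_1$ and $k_r$, giving the shifted $Z_B$ in (\ref{eq.zutozb}). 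Solving (\ref{eq.zutozb}) for the first $Z_B$ writes a $Z_B$ carrying $m$ lower indices as one $Z_U$ minus a $Z_B$ carrying $m-1$ lower indices; feeding this back into itself and inducting on $p$ produces (\ref{eq.zubr}), with the alternating signs recorded by the factor $(-1)^{k-1}$.

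With (\ref{eq.zubr}) in hand I would set $p=m$ and read off the two terms. First, in the leading term $(-1)^p\,\ZB{\alpha_{m+1-p},\ldots,\alpha_r}{\alpha_{m-p},\ldots,\alpha_1}$ the lower indices run from $\alpha_{m-p}$ down to $\alpha_1$ and collapse to the empty string at $p=m$ (the lower endpoint index $m-p=0$ falls below $1$), while the upper string fills out to $\alpha_1,\ldots,\alpha_r$; invoking the special value $\ZB{\alpha_1,\ldots,\alpha_r}{\emptyset}=\zeta(\alpha_1,\ldots,\alpha_r)$ turns this into $(-1)^m\zeta(\alpha_1,\ldots,\alpha_r)$. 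Second, I would re-index the remaining sum by replacing the summation variable $k$ with $m+1-k$: the sign $(-1)^{k-1}$ becomes $(-1)^{m-k}$ and the summand $\ZU{\alpha_{m+2-k},\ldots,\alpha_r}{\alpha_{m+1-k},\ldots,\alpha_1}$ becomes $\ZU{\alpha_{k+1},\ldots,\alpha_r}{\alpha_k,\ldots,\alpha_1}$, which is exactly $\sum_{k=1}^m(-1)^{m-k}\ZU{\alpha_{k+1},\ldots,\alpha_r}{\alpha_k,\ldots,\alpha_1}$.

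The \emph{main obstacle} is not a computation but the degenerate boundary bookkeeping. I would check that throughout the recursion every $Z_B$ and $Z_U$ keeps a non-empty upper index string terminating in $\alpha_r\ge 2$, which is what guarantees absolute convergence and justifies the rearrangements. This is also the point that pins down the admissible range of $m$: the $k=m$ summand $\ZU{\alpha_{m+1},\ldots,\alpha_r}{\alpha_m,\ldots,\alpha_1}$ already requires $m\le r-1$ for a non-empty top, so I would either tighten the hypothesis to $1\le m\le r-1$ or make explicit the convention under which the $m=r$ endpoint (where the top string empties) is to be interpreted. Away from this endpoint the proposition is precisely the $p=m$ instance of (\ref{eq.zubr}) after the collapse and the re-indexing above.
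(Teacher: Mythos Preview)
Your proposal is correct and follows exactly the paper's approach: the paper derives (\ref{eq.zutozb}) by the same $k_1\le\ell_1$ versus $\ell_1<k_1$ split, iterates it to (\ref{eq.zubr}), and then simply states that setting $p=m$ gives the proposition. Your additional re-indexing $k\mapsto m+1-k$ and your remark about the degenerate $m=r$ endpoint are bookkeeping that the paper leaves implicit (and indeed the paper never addresses the $m=r$ boundary, only ever invoking the proposition with a nonempty top string).
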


Similarly, the function $Z_L$ has the relations with $Z_B$:

\begin{align*}
\ZL{\alpha_1,\ldots,\alpha_r}{\beta_1,\ldots,\beta_m}
&=\ZB{\alpha_1,\ldots,\alpha_r}{\beta_1,\ldots,\beta_m}
+\ZB{\alpha_1,\ldots,\alpha_r,\beta_m}{\beta_1,\ldots,\beta_{m-1}},\\
\ZB{\alpha_r,\ldots,\alpha_{m+1}}{\alpha_1,\ldots,\alpha_{m}}
&=(-1)^p\ZB{\alpha_r,\ldots,\alpha_{m+1-p}}{\alpha_1,\ldots,\alpha_{m-p}}\\
&\quad+\sum^p_{k=1}(-1)^{k-1}
\ZL{\alpha_r,\ldots,\alpha_{m+2-k}}{\alpha_1,\ldots,\alpha_{m+1-k}},
\end{align*}
where $p\geq 1$. If we set $p=m$, then we have

\begin{align}
&\ZB{\alpha_r,\ldots,\alpha_{m+1}}{\alpha_1,\ldots,\alpha_{m}}\\
&\quad=(-1)^{m}\zeta(\alpha_r,\ldots,\alpha_1)+
\sum^{m}_{k=1}(-1)^{m-k}
\ZL{\alpha_r,\ldots,\alpha_{k+1}}{\alpha_1,\ldots,\alpha_{k}}.\nonumber
\end{align}

\subsection{Another Proof for Sum Formula of MZVs}
In order to give another proof of Eq\,(\ref{eq.sum}), we need 
the following result.

\begin{prop}\cite[Theorem A]{CE2018}
For any nonnegative integers $p,q$, and $n$, we have

\begin{align}\label{eq.zuint}
&\ZU{\{1\}^p,n+2}{\{1\}^q}\\
&\quad=\frac1{p!q!n!}\int_{E_2}\left(\log\frac1{1-t_1}\right)^p
\left(\log\frac1{1-t_2}\right)^q\left(\log\frac{t_2}{t_1}\right)^n
\frac{dt_1dt_2}{(1-t_1)t_2},\nonumber
\end{align}

\noindent where $E_2$ is the region $0<t_1<t_2<1$.
\end{prop}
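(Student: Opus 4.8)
The plan is to evaluate the double integral directly, by expanding the three logarithmic factors into power series and integrating in $t_1$ and then $t_2$, collapsing everything onto the nested sum defining $Z_U$. Write $e_p(m):=\sum_{1\le i_1<\cdots<i_p\le m}(i_1\cdots i_p)^{-1}$ and let $h_q(m):=\sum_{1\le i_1\le\cdots\le i_q\le m}(i_1\cdots i_q)^{-1}$ be the complete homogeneous symmetric function in $1,\tfrac12,\ldots,\tfrac1m$. By the definition of $Z_U$ (the inner $\ell$-sum is bounded by the top index $k_{p+1}$),
\[
\ZU{\{1\}^p,n+2}{\{1\}^q}=\sum_{1\le k_1<\cdots<k_{p+1}}\frac{h_q(k_{p+1})}{k_1\cdots k_p\,k_{p+1}^{\,n+2}},
\]
so it suffices to show that the integral equals this sum.

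First I would carry out the inner integration over $t_1\in(0,t_2)$. From $(1-t_1)^{-z-1}=\sum_{p\ge0}\frac{z^p}{p!}\big(\log\frac1{1-t_1}\big)^p\frac1{1-t_1}$ and the elementary generating function $\prod_{i=1}^{k-1}(1+z/i)=\sum_{p\ge0}e_p(k-1)z^p$, comparison of coefficients of $z^p$ gives the expansion
\[
\frac1{p!}\Big(\log\tfrac1{1-t_1}\Big)^p\frac1{1-t_1}=\sum_{N\ge1}e_p(N-1)\,t_1^{N-1}.
\]
Substituting $t_1=t_2x$ and using $\int_0^1 x^{k-1}(-\log x)^n\,dx=n!/k^{n+1}$, the inner integral becomes, for each fixed $t_2$,
\[
\frac1{p!\,n!}\int_0^{t_2}\Big(\log\tfrac1{1-t_1}\Big)^p\Big(\log\tfrac{t_2}{t_1}\Big)^n\frac{dt_1}{1-t_1}=\sum_{1\le k_1<\cdots<k_{p+1}}\frac{t_2^{\,k_{p+1}}}{k_1\cdots k_p\,k_{p+1}^{\,n+1}}.
\]

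Next I would multiply by $\frac1{q!}\big(\log\frac1{1-t_2}\big)^q\,dt_2/t_2$ and integrate over $t_2\in(0,1)$, interchanging the (absolutely convergent) sum and integral. This reduces the whole computation to the single-variable identity
\[
\int_0^1 t^{k-1}\,\frac1{q!}\Big(\log\tfrac1{1-t}\Big)^q\,dt=\frac{h_q(k)}{k}.
\]
Granting it, the factor attached to the top index $k_{p+1}$ becomes $h_q(k_{p+1})/k_{p+1}$, and the integral collapses to exactly the sum displayed above for $\ZU{\{1\}^p,n+2}{\{1\}^q}$.

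This last identity is the crux, and I would establish it by a generating-function argument. Summing against $z^q$ and using $\sum_{q\ge0}\frac{z^q}{q!}\big(\log\frac1{1-t}\big)^q=(1-t)^{-z}$ turns its left-hand side into the Beta integral $\int_0^1 t^{k-1}(1-t)^{-z}\,dt=\Gamma(k)\Gamma(1-z)/\Gamma(k+1-z)$; writing $\Gamma(k+1-z)=\Gamma(1-z)\prod_{i=1}^k(i-z)$ and $\prod_{i=1}^k(i-z)=k!\prod_{i=1}^k(1-z/i)$ gives
\[
\int_0^1 t^{k-1}(1-t)^{-z}\,dt=\frac1k\prod_{i=1}^k\frac1{1-z/i}=\frac1k\sum_{q\ge0}h_q(k)\,z^q,
\]
and comparing coefficients of $z^q$ finishes the proof. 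I expect this to be the main obstacle: recognizing the $t_2$-integral as a Beta function and matching the Pochhammer denominator $\prod_{i=1}^k(i-z)$ against the generating series $\prod_{i=1}^k(1-z/i)^{-1}=\sum_q h_q(k)z^q$ of the complete homogeneous symmetric functions. Everything else is power-series bookkeeping and the elementary substitutions above; the convergence of the sums and the legitimacy of the interchanges follow from $0<t_1<t_2<1$ together with $n+2\ge2$, and are routine.
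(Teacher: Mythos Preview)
Your argument is correct. The expansion of $\frac{1}{p!}\bigl(\log\frac{1}{1-t_1}\bigr)^p(1-t_1)^{-1}$ via the coefficients $e_p(N-1)$, the substitution $t_1=t_2x$ together with $\int_0^1 x^{k-1}(-\log x)^n\,dx=n!/k^{n+1}$, and the Beta-integral identification $\int_0^1 t^{k-1}(1-t)^{-z}\,dt=\frac{1}{k}\prod_{i=1}^k(1-z/i)^{-1}$ all check out, and the final matching against $\sum_q h_q(k)z^q$ is exactly the generating function for the complete homogeneous symmetric functions in $1,\tfrac12,\ldots,\tfrac1k$. Nonnegativity of the integrand justifies the interchanges by Tonelli, and the edge cases $p=0$ or $q=0$ are covered by your conventions $e_0\equiv 1$, $h_0\equiv 1$.

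As for comparison with the paper: there is nothing to compare. The paper does not prove this proposition; it merely quotes it from \cite{CE2018} (Chen--Eie, arXiv:1810.11795) and uses it as an input in Section~7.2. Your write-up therefore supplies a complete, self-contained proof where the present paper offers none, which is a genuine addition rather than a variant of an existing argument.
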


From the basic special cases Eq.\,(\ref{eq.b17}) we know that 

$$
\zeta(m+n+2)=\ZB{n+2}{\{1\}^m}.
$$

Applying Eq.\,(\ref{eq.zuzbzeta}), this value of $Z_B$ can be written as

$$
\sum_{k=0}^m(-1)^{k}\ZU{\{1\}^k,n+2}{\{1\}^{m-k}}.
$$

Summing the corresponding integral representation of Eq\,(\ref{eq.zuint}), 
we have

$$
\frac1{m!n!}\int_{E_2}\left(\log\frac{1-t_1}{1-t_2}\right)^m
\left(\log\frac{t_2}{t_1}\right)^n\frac{dt_1dt_2}{(1-t_1)t_2}.
$$

This integral is equal to \cite{ELO2009}

$$
\sum_{|\bm\alpha|=m+n+1}\zeta(\alpha_1,\ldots,\alpha_m,\alpha_{m+1}+1).
$$

Therefore, we get the sum formula of multiple zeta values.



\end{document}